\theoremstyle{plain}
\newtheorem{theorem}{Theorem}[section]
\newtheorem{lemma}{Lemma}[section]
\newtheorem{proposition}{Proposition}[section]
\newtheorem*{claim*}{Claim}
\newtheorem*{lemma*}{Lemma}
\newtheorem*{theorem*}{Theorem}
\theoremstyle{definition}
\newtheorem{definition}{Definition}[section]
\theoremstyle{remark}
\DeclareMathOperator{\Hessian}{Hess}
\DeclareMathOperator{\support}{supp}
\DeclareMathOperator{\Dom}{Dom}
\renewcommand{\Im}{\operatorname{Im}}
\renewcommand{\Re}{\operatorname{Re}}
\renewcommand{\i}{\operatorname{\sqrt{-1}}}
\begin{document}
	
	\title{The $\bar{\partial}$-Neumann operator with the Sobolev norm of integer orders}
	
	\author{
		Phillip Harrington\\
		psharrin@uark.edu
		\and
		Bingyuan Liu\\
		bl016@uark.edu
	}
	
	\date{\today}

	\maketitle
	
	\begin{abstract}
Let $\Omega\subset\mathbb{C}^m$  be a bounded pseudoconvex domain with smooth boundary. For each $k\in\mathbb{N}$, we give a sufficient condition to estimate the $\bar\partial$-Neumann operator in the Sobolev space $W^k(\Omega)$. The key feature of our results is a precise formula for $k$ in terms of the geometry of the boundary of $\Omega$.
	\end{abstract}
\section{Introduction}
Let $\Omega\subset \mathbb{C}^m$ be a bounded pseudoconvex domain with smooth boundary. Solving the $\bar{\partial}$-equation on $\Omega$ has a long history. It has many applications in various fields including differential geometry, algebraic geometry and partial differential equations.

The $L^2$ theory for $\bar{\partial}$ originated in work of Andreotti--Vesentini (\cite{AV61} and \cite{AV65}) and H\"{o}rmander (\cite{Ho65}). They found the $\bar{\partial}$-equation is weakly solvable on pseudoconvex domains in $\mathbb{C}^m$. This led to successes in several complex variables, algebraic geometry and partial differential equations. As a byproduct, the $\bar\partial$-Neumann operator, an inverse of the complex Laplacian $\Box$ with a non-coercive boundary condition, is also bounded in $L^2$ for pseudoconvex domains.

In the meantime, Kohn (\cite{Ko63} and \cite{Ko64}) studied the regularity of the $\bar{\partial}$-Neumann operator in Sobolev spaces. He found the $\bar{\partial}$-Neumann operator is also bounded in the Sobolev space $W^k(\Omega)$ for all $k>0$ on bounded strongly pseudoconvex domains with smooth boundaries. Indeed, he found on a strongly pseudoconvex domains, one has the inequality \[\|u\|_{W^{k+1}}\lesssim\|\Box u\|_{W^k}\] for all $k>0$ and $u$ in the domain of $\Box$. Subelliptic estimates of this type with weaker subelliptic gains have been studied on pseudoconvex domains of finite type by Catlin (\cite{Ca87}) and D'Angelo (\cite{DA79} and \cite{DA82}).

However, the extension of the inequality \[\|u\|_{W^k}\lesssim\|\Box u\|_{W^k} \] to infinite type pseudoconvex domains appears later. In \cite{KN65}, Kohn and Nirenberg showed that it suffices for the $\bar\partial$-Neumann operator to be compact.  In the 1990s, Boas and Straube (\cite{BS90}, \cite{BS91}, \cite{BS91b} and \cite{BS93}) found a condition which makes the inequality  \[\|u\|_{W^k}\lesssim\|\Box u\|_{W^k} \] holds for all $k>0$ (see also the work of Chen \cite{Ch91}). Around the same time, Barrett (\cite{Ba92}) discovered the aforementioned inequality cannot hold on the worm domains of Diederich and Forn{\ae}ss, a family of smooth, bounded pseudoconvex domains which do not satisfy the condition of Boas and Straube. The condition of Boas and Straube turns to be useful in many subjects and this condition was recently improved by the first author (\cite{Ha11}) and Straube (\cite{St08}).

In this article, we will give a sufficient condition for \[\|u\|_{W^k}\lesssim\|\Box u\|_{W^k} \] to hold for a fixed positive integer $k\in\mathbb{N}$. The key feature of this paper is a precise formula for $k$ in terms of the geometry of the boundary of $\Omega$. Our main theorems are Theorem \ref{maintheorem} and Theorem \ref{maintheorem2}. Our method is based on the fundamental work of Boas and Straube \cite{BS91}, \cite{BS91b}, \cite{BS93} and Chen \cite{Ch91}. We note that Boas and Straube's method involves first estimating the Bergman projection, since they have already shown that these estimates are equivalent to estimates for the $\bar\partial$-Neumann problem \cite{BS90}.  Our approach follows more closely the method of Chen \cite{Ch91}, which directly estimates the $\bar\partial$-Neumann operator without the intermediate result on the Bergman projection.  The reader is referred to the books of Chen and Shaw \cite{CS01} and Straube \cite{St10}.

\section{Preliminaries}

\subsection{Notation and Main Results}

For $\Omega\subset\mathbb{C}^m$, we denote the set of weakly pseudoconvex points in $\partial\Omega$ by $\Sigma$. Let $\lbrace U_\alpha\rbrace_{\alpha=1}^M$ be open sets of which the union covers $\Sigma$. Let $\lbrace L_j\rbrace_{j=1}^{m-1}$ be $(1, 0)$ tangential vector fields with smooth coefficients in $U_\alpha$ such that $\lbrace L_j|_{\partial\Omega}\rbrace_{j=1}^{m-1}$ are tangential to $\partial\Omega$. Let $L_m=(\sum|\frac{\partial\delta}{\partial z_i}|^2)^{-1}\sum \frac{\partial\delta}{\partial \bar{z}_i}\frac{\partial}{\partial z_i}$ be the $(1,0)$ complex normal vector in $U_\alpha$, where $\delta$ denotes the signed distance function for $\Omega$. We also assume $\lbrace \sqrt{2}L_j\rbrace_{j=1}^{m}$ form an orthonormal basis for $T^{1,0}(U_\alpha)$.

For the following, we will fix $\alpha$ and assume that all forms are supported in $U_\alpha$.  We will show in the proof of Theorem \ref{maintheorem} that a partition of unity can be used to combine these local estimates.

Let $g(\cdot,\cdot)$ denotes the Euclidean metric. For example \[g(f\omega_{\overline{L}_1}\wedge\omega_{\overline{L}_2},h\omega_{\overline{L}_1}\wedge\omega_{\overline{L}_2})=f\bar{h}g(\omega_{\overline{L}_1}\wedge\omega_{\overline{L}_2},\omega_{\overline{L}_1}\wedge\omega_{\overline{L}_2})=\frac{f\bar{h}}{2^2}.\] Angle brackets denote the $L^2$ inner product, i.e., $<u,v>=<u,v>_{L^2}=\int_{\Omega}g(u,v)\,dV$. We use $\|\cdot\|$ to denote the $L^2$ norm. Let $\nabla$ denote the Levi-Civita connection with respect to the Euclidean metric.
Also, $\|u\|^2_k=\int_{\Omega}g(\nabla^k u,\nabla^k u)\,dV$ is slightly different from the norm for the Sobolev space of order $k$. The latter is defined to be \[\|u\|^2_{W^k}=\sum_{j=0}^{k}\int_{\Omega}g(\nabla^ju,\nabla^ju)\,dV.\]

Let $u$ be a $(0, q)$-form for $1\leq q\leq m$. Let ${\sum_I}'$ denote the sum over all $q$-tuples $I$ with increasing indices.  We may express $u$ in local coordinate chart as:
\[u={\sum_{I}}'f_I\omega_{\overline{L}_I}=\frac{1}{2^q}{\sum_{I}}'g(u,\omega_{\overline{L}_I})\omega_{\overline{L}_I}\]
In this notation, we may compute
\[\bar{\partial}u=\frac{1}{2^q}{\sum_{I}}' \sum_{i\notin I} g(\nabla_{\overline{L}_i}u,\omega_{\overline{L}_I})\omega_{\overline{L}_i}\wedge\omega_{\overline{L}_I},\]
and
\[\bar{\partial}^* u=-\frac{1}{2^{q-1}}{\sum_{I}}'\sum_{i\in I} g(\nabla_{L_i}u, \omega_{\overline{L}_I})\overline{L}_i\lrcorner\omega_{\overline{L}_I}.\]

We observe that $u\in\Dom(\bar{\partial}^*)$ if and only if $g(u, \omega_{\overline{L}_J}\wedge\omega_{\overline{L}_m})=0$ on $\partial\Omega$, for all arbitrary $q-1$-tuples $J$ with increasing indices.

Moreover, it is easy to check that

\[\nabla_{L_i}\omega_{\overline{L}_j}=2\sum_{i,j,k}g(\nabla_{L_i}L_j, L_k)\omega_{\overline{L}_k}\qquad\text{and}\qquad\nabla_{\overline{L}_i}\omega_{\overline{L}_j}=2\sum_{i,j,k}g(\nabla_{\overline{L}_i}L_j, L_k)\omega_{\overline{L}_k}.\]

In general, $\nabla_{L_m-\overline{L}_m}u$ is not in $\Dom(\bar{\partial}^*)\cap C^\infty_{(0,1)}(\overline{\Omega})$. For this, we define the following notation $\nabla^c$.

\begin{definition}
\label{defn:special_connection}
	Let $u=\frac{1}{2^q}{\sum_{I}}'g(u,\omega_{\overline{L}_I})\omega_{\overline{L}_I}$. Then \[\nabla^c_{L_m-\overline{L}_m} u:=\frac{1}{2^q}{\sum_{I}}'(L_m-\overline{L_m})g(u,\omega_{\overline{L}_I})\omega_{\overline{L}_I}.\]
\end{definition}
In other words, $\nabla^c$ only acts on the coefficients of $u$ with respect to $\omega_{\overline{L}_I}$.
We also observe that \[\nabla_{L_m-\overline{L}_m}u=\nabla^c_{L-\overline{L}_m}u+\frac{1}{2^q}{\sum_{I}}'g(u,\omega_{\overline{L}_I})\nabla_{L_m-\overline{L}_m}\omega_{\overline{L}_I}.\]

Moreover, $u\in\Dom(\bar{\partial}^*)\cap C^\infty_{(0,q)}(\overline{\Omega})$ implies $\nabla_{L_m-\overline{L}_m}^cu\in\Dom(\bar{\partial}^*)\cap C^\infty_{(0,q)}(\overline{\Omega})$.

It is easy to check that \[(L_m-\overline{L}_m)g(u,v)=g(\nabla^c_{L_m-\overline{L}_m}u, v)+g(u,\nabla^c_{\overline{L}_m-L_m}v)\]

With this notation in place, we are able to state our main result:
\begin{theorem}\label{maintheorem2}
	Let $\Omega$ be a bounded pseudoconvex domain with smooth boundary in $\mathbb{C}^m$ and $1\leq q\leq m$.  Assume there exists a real-valued smooth function $\psi$ defined in a neighborhood $U$ of $\Sigma$ in $\partial\Omega$ so that \[\sup_{\Sigma}|L\psi-2g(\nabla_{L}L_m,L_m)|<\frac{1}{km\sqrt{B{m\choose q}}},\] for all unit $(1,0)$ vector fields $L$ in a neighborhood of $\partial\Omega$ where $k\in\mathbb{N}$ and $B$ is the constant from Theorem \ref{basic}. Then the $\bar{\partial}$-Neumann operator for $(0,q)$-forms is bounded from $W^k$ to $W^k$.
\end{theorem}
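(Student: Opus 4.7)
My plan is to establish the estimate by induction on $k$, following Chen's direct approach (rather than Boas--Straube's detour via the Bergman projection). The base case $k=0$ is Kohn's classical $L^2$ existence theorem on pseudoconvex domains. For the inductive step, I would assume the $W^{k-1}$ bound and use the hypothesized smallness of $L\psi-2g(\nabla_L L_m,L_m)$ to push the estimate to $W^k$.

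The technical strategy is to split $\|\nabla^k u\|$ into tangential and normal components. Normal derivatives are recovered from tangential ones using the equation $\Box u=f$ together with elliptic estimates for $\bar\partial\oplus\bar\partial^*$, so the real work is to control iterated tangential derivatives. Among the tangential directions the only delicate one is $T:=L_m-\overline{L}_m$, since the purely holomorphic or antiholomorphic tangential fields commute well with $\bar\partial$ and $\bar\partial^*$. The preliminaries already verify that $\nabla^c_T$ preserves $\Dom(\bar\partial^*)$, so I would plug $(\nabla^c_T)^k u$ into the basic estimate (Theorem \ref{basic}) and control the result.

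The crux is the commutator computation. Each time $\bar\partial$ or $\bar\partial^*$ is pushed through $\nabla^c_T$, the error terms have the shape $g(\nabla_L L_m,L_m)$ acting componentwise on $u$. Introducing the weight $e^{-\psi}$ via a twisted basic estimate shifts this coefficient to a multiple of $L\psi-2g(\nabla_L L_m,L_m)$, which is small by hypothesis. The quantitative constant $\frac{1}{km\sqrt{B{m\choose q}}}$ has a transparent origin in this computation: ${m\choose q}$ from summing over the increasing $q$-tuples indexing a $(0,q)$-form, $m$ from summing over the $(1,0)$ directions $L=L_1,\ldots,L_m$, $B$ from the basic estimate itself, and $k$ from iterating the commutator $k$ times. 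The smallness threshold is precisely what is needed so that the accumulated error produced over the $k$ iterations can be absorbed into the left-hand side.

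The main obstacle will be the careful bookkeeping of every error term that appears when one commutes $\nabla^c_T$ past $\bar\partial$ and $\bar\partial^*$, particularly the discrepancy between $\nabla_T$ and $\nabla^c_T$, which by Definition~\ref{defn:special_connection} produces lower-order terms $g(u,\omega_{\overline{L}_I})\nabla_T\omega_{\overline{L}_I}$ that must be fed into the induction hypothesis at level $W^{k-1}$. After the local estimate is obtained near $\Sigma$, a partition-of-unity argument (deferred to the proof of Theorem \ref{maintheorem}) is needed to combine the estimates on each $U_\alpha$ with standard interior and subelliptic estimates on the strongly pseudoconvex part of $\partial\Omega$, producing the global $W^k$ bound on $N_q$.
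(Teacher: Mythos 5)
Your outline accurately captures the structure of the paper's \emph{a priori} estimate (Theorem \ref{maintheorem}): reducing to the single delicate tangential direction $T$ by treating normal and antiholomorphic derivatives as benign, plugging $\chi(\nabla^c_T)^k u$ into H\"ormander's basic estimate, tracking the commutator coefficient $L\psi - 2g(\nabla_L L_m, L_m)$, and absorbing the accumulated error via the quantitative smallness of the hypothesis. Your accounting of where each factor in $\frac{1}{km\sqrt{B\binom{m}{q}}}$ comes from is also essentially right (though the paper incorporates $\psi$ by working directly with the vector field $T = e^\psi(L_m - \overline{L}_m)$ and the special connection $\nabla^c_T$ of Definition \ref{defn:special_connection}, rather than through a twisted weighted norm as your ``$e^{-\psi}$ twisted basic estimate'' phrasing suggests --- these are closely related but not the same device). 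The base case $k=0$ is H\"ormander's $L^2$ theory, not Kohn's, but that is a minor attribution slip.

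The genuine gap is that you have only sketched a proof of the \emph{a priori} estimate, which asserts $\|u\|_{W^k}\lesssim\|\Box u\|_{W^k}$ for $u$ already known to be smooth up to $\overline\Omega$ and in $\Dom(\Box)$. Theorem \ref{maintheorem2} is the stronger assertion that the $\bar\partial$-Neumann operator $N_q$ maps $W^k$ boundedly to $W^k$; this does \emph{not} follow from the \emph{a priori} estimate alone, because for $f\in W^k$ one does not know in advance that $N_q f$ is smooth (or even in $W^k\cap\Dom(\Box)$), so the \emph{a priori} estimate cannot be applied directly. The paper closes this gap in its final section by exhausting $\Omega$ by strongly pseudoconvex subdomains $\Omega_\tau=\{\delta+\tau e^{M|z|^2}<0\}$, where $N_\tau$ is already known to be globally regular by Kohn, proving the constants in the \emph{a priori} estimate are uniform in $\tau$, extracting a weak limit $v$ of $N_\tau f$, and then verifying $v\in\Dom(\Box)$ with $\Box v=f$ and $\|v\|_{W^k}\lesssim\|f\|_{W^k}$. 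Your proposal should either carry out such an approximation argument or invoke an elliptic-regularization/a-priori-to-genuine lemma; without it, the step from the estimate for smooth forms to the continuity of $N_q$ on $W^k$ is missing.
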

We will see in Section \ref{sec:commutators_first_derivatives} that our weight function $\psi$ allows the construction of a $(1,0)$-vector field $e^\psi L_m$ related to those studied by Boas and Straube (see \cite{BS99}, for example), or a real tangential vector field $e^{\psi}(L_m-\overline{L}_m)$ related to those studied by Chen \cite{Ch91}.  In contrast to those papers, note that we do not require any uniform bound on $\psi$; it has already been noted by the first author in \cite{Ha11} that such a restriction may not be necessary.  We accomplish this partly by requiring $\psi$ to be real-valued, in contrast to the condition studied by Boas and Straube; it remains to be seen if our method can be generalized to the case in which the imaginary part of $\psi$ is sufficiently small.

\subsection{Foundational Results}

The following estimate is called the basic estimate in many contexts; see the survey by Boas and Straube \citep{BS99} for details.

\begin{theorem}[Hörmander \cite{Ho65}]\label{basic}
	Let $\Omega$ be a bounded pseudoconvex domain in $\mathbb{C}^m$ with smooth boundary. For any arbitrary $(0,q)$-form  $u\in\Dom(\bar\partial)\cap\Dom(\bar{\partial}^*)$, $1\leq q \leq n$, we have that
	\[\|u\|^2\leq B (\|\bar{\partial}u\|^2 +\|\bar{\partial}^*u\|^2)\] for some $B>0$. Moreover, if $u\in\Dom(\Box)$, we have that \[\|u\|\leq B\|\Box u\|.\]
\end{theorem}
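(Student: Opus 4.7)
The plan is to establish the basic estimate via the Morrey--Kohn--H\"ormander identity combined with a weight-function argument, following H\"ormander's classical strategy. First, by a Friedrichs mollification argument I would reduce to smooth forms $u \in C^\infty_{(0,q)}(\overline\Omega) \cap \Dom(\bar\partial^*)$: such forms are dense in $\Dom(\bar\partial) \cap \Dom(\bar\partial^*)$ with respect to the graph norm $(\|u\|^2 + \|\bar\partial u\|^2 + \|\bar\partial^* u\|^2)^{1/2}$, so any inequality proved for them passes to the full domain.

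For smooth forms the central computation is the Morrey--Kohn--H\"ormander identity. Expanding $\|\bar\partial u\|^2 + \|\bar\partial^* u\|^2$ by means of the explicit formulas for $\bar\partial$ and $\bar\partial^*$ recorded in the preliminaries, writing $u = {\sum_I}' u_I\, \omega_{\bar L_I}$, and integrating by parts, one obtains the identity
\[
\|\bar\partial u\|^2 + \|\bar\partial^* u\|^2 = {\sum_K}' \sum_{j,k} \int_\Omega \bigl|\nabla_{\bar L_k} u_{jK}\bigr|^2 \, dV + {\sum_K}' \sum_{j,k} \int_{\partial\Omega} \rho_{j\bar k}\, u_{jK} \overline{u_{kK}} \, dS + E(u),
\]
where $\rho$ is a defining function for $\Omega$ with $|d\rho|=1$ on $\partial\Omega$ and $E(u)$ collects lower-order error terms arising from commutators $[L_j,\bar L_k]$ and connection coefficients $g(\nabla_{L_j}L_m,L_m)$. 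The boundary condition $u\in\Dom(\bar\partial^*)$, namely $g(u,\omega_{\bar L_J}\wedge\omega_{\bar L_m})=0$ on $\partial\Omega$, restricts the boundary sum to holomorphic tangential indices, and pseudoconvexity of $\Omega$ then forces the boundary integral to be non-negative.

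To recover the $L^2$ bound I would redo the computation in the weighted inner product $\int_\Omega g(\cdot,\cdot)\,e^{-\phi}\,dV$ with $\phi(z)=|z|^2$. The weighted identity produces an additional positive interior term ${\sum_K}'\sum_{j,k}\phi_{j\bar k}\,u_{jK}\overline{u_{kK}}\,e^{-\phi}$, which because $\phi_{j\bar k}=\delta_{jk}$ dominates $|u|^2 e^{-\phi}$. Since $\Omega$ is bounded, $e^{-\phi}$ is comparable to $1$ with constants depending only on the diameter of $\Omega$, so after absorbing $E(u)$ on the right via a small-constant/large-constant inequality one obtains $\|u\|^2 \leq B(\|\bar\partial u\|^2 + \|\bar\partial^* u\|^2)$ with $B$ of order $\mathrm{diam}(\Omega)^2$. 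The ``moreover'' clause is then immediate: for $u\in\Dom(\Box)$ one has $\langle \Box u, u\rangle = \|\bar\partial u\|^2 + \|\bar\partial^* u\|^2 \geq B^{-1}\|u\|^2$, so Cauchy--Schwarz gives $\|u\|^2 \leq B\|\Box u\|\,\|u\|$ and hence $\|u\|\leq B\|\Box u\|$.

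The main obstacle is the bookkeeping in the Morrey--Kohn--H\"ormander identity: one must verify that after applying the $\Dom(\bar\partial^*)$ boundary condition the boundary integrand reduces exactly to a non-negative multiple of the Levi form evaluated on $(1,0)$-tangential directions, so that pseudoconvexity yields the sign needed. In the moving frame $\{L_j\}$ used in the paper, this requires careful tracking of the error terms $E(u)$ and of the normalizations implicit in $L_m = (\sum|\partial\delta/\partial z_i|^2)^{-1}\sum (\partial\delta/\partial\bar z_i)\partial/\partial z_i$; none of these affect the qualitative constant $B$, but they are the essential technical nuisance.
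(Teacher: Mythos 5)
The paper does not prove this theorem itself; it is cited from H\"ormander \cite{Ho65}, and the reader is directed to Proposition 4.3.1 and Theorem 4.3.4 in Chen--Shaw \cite{CS01}. Your sketch follows essentially the classical route those references take---density reduction, the Morrey--Kohn--H\"ormander identity with the boundary term nonnegative by pseudoconvexity, a strictly plurisubharmonic weight to produce a coercive interior term, and the Cauchy--Schwarz step for the $\Box$ estimate---so there is no structural divergence worth comparing.

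However, one step as written does not close. When you pass to the weighted inner product $\int_\Omega g(\cdot,\cdot)e^{-\phi}\,dV$, the operator $\bar\partial$ is unchanged but its formal adjoint becomes $\bar\partial^*_\phi=\bar\partial^*+\theta_\phi$, where $\theta_\phi$ is a zeroth-order operator of pointwise norm $|\partial\phi|$. The weighted identity therefore yields $q\|u\|^2_\phi\leq\|\bar\partial u\|^2_\phi+\|\bar\partial^*_\phi u\|^2_\phi$, not the estimate with the unweighted $\bar\partial^*$ that you wrote. Converting $\bar\partial^*_\phi$ to $\bar\partial^*$ forces you to absorb a term of size $\sup_\Omega|\partial\phi|^2\,\|u\|^2_\phi$, which for $\phi=|z|^2$ is of order $\big(\mathrm{diam}\,\Omega\big)^2\|u\|^2_\phi$ and overwhelms the coercive term $q\|u\|^2_\phi$ on the left unless the domain is small. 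The standard fix is to scale the weight, $\phi=t|z|^2$: the interior Hessian term scales like $tq$ while the adjoint discrepancy scales like $t^2(\mathrm{diam}\,\Omega)^2$, so choosing $t$ on the order of $q/(\mathrm{diam}\,\Omega)^2$ lets the absorption go through and produces $B$ of order $(\mathrm{diam}\,\Omega)^2$, consistent with the constant $\frac{e}{q}\sup_{z,z'\in\Omega}|z-z'|^2$ the paper cites. (Chen--Shaw instead deduce the basic estimate from H\"ormander's weighted $L^2$ solvability theorem via a closed-range argument, which avoids the adjoint conversion altogether.) Note also that your error term $E(u)$ was defined to collect only the moving-frame commutators; it does not include the adjoint discrepancy, so the small-constant/large-constant absorption you describe does not address this point.
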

This result is a consequence of the Morrey-Kohn-H\"ormander identity; see Proposition 4.3.1 in \cite{CS01} for details.  H\"ormander uses this estimate to prove closed range and solvability for the $\bar\partial$-operator in the $L^2$ sense.  The optimal constant $B$ represents the reciprocal of the smallest eigenvalue of $\Box$, or equivalently, the largest eigenvalue of the $\bar\partial$-Neumann operator.  H\"ormander's proof in \cite{Ho65} demonstrates that this optimal constant is bounded above by $\frac{e}{q}\sup_{z,z'\in\Omega}|z-z'|^2$ (see Theorem 4.3.4 in \cite{CS01}).

For strongly pseudoconvex domains, it is known that the Sobolev estimate of arbitrary order hold. More generally, we have the following pseudolocal estimate.

\begin{theorem}[See Theorem 3.6 in Straube \citep{St10}]\label{pseudolocal}
	Let $\Omega$ be a bounded strongly pseudoconvex domain in $\mathbb{C}^m$ with smooth boundary. Fix a cutoff function $\phi\in C^\infty_0(\mathbb{C}^m)$. For every $k>0$, the following estimates hold for arbitrary $(0,q)$-forms  $u\in\Dom(\Box)$, $(1\leq q \leq n)$:
	\[\|\phi u\|_{W^{k+1}}\lesssim\|\Box u\|_{W^k}.\]
\end{theorem}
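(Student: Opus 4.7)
The plan is to derive the estimate in two stages: first establish the corresponding global Sobolev estimate on all of $\Omega$, then cut off with $\phi$.  The global estimate rests on the fact that, for strongly pseudoconvex domains, the $\bar\partial$-Neumann problem is subelliptic of gain $\frac{1}{2}$.

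\textbf{Subelliptic estimate.} Since $\Omega$ is strongly pseudoconvex, the Levi form $\bigl(g(\nabla_{L_i}L_m,L_j)\bigr)_{i,j<m}$ is uniformly positive definite on $\partial\Omega$.  Inserting this positivity into the Morrey--Kohn--Hörmander identity that underlies Theorem~\ref{basic} yields, for every $u\in\Dom(\bar\partial)\cap\Dom(\bar\partial^*)$ supported near the boundary,
\[\|u\|_{1/2}^2 \lesssim \|\bar\partial u\|^2 + \|\bar\partial^* u\|^2.\]
Together with interior elliptic regularity, this gives $\|u\|_{W^{1/2}}\lesssim \|\bar\partial u\|+\|\bar\partial^* u\|+\|u\|$ globally.

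\textbf{Iteration to higher order.} Introduce tangential pseudodifferential operators $\Lambda^s$ of order $s=k+\tfrac12$ supported in a boundary coordinate patch, and apply the subelliptic estimate to $\Lambda^s u$.  The commutators $[\Lambda^s,\bar\partial]$ and $[\Lambda^s,\bar\partial^*]$ are of lower tangential order and can be absorbed inductively.  The delicate point is that $\Lambda^s u$ need not lie in $\Dom(\bar\partial^*)$; this is handled by Kohn's elliptic regularization, i.e., one solves $(\Box+\varepsilon\Delta)u_\varepsilon=\Box u$ with a coercive boundary condition, establishes the desired estimate for $u_\varepsilon$ uniformly in $\varepsilon$, and then passes to the limit $\varepsilon\to 0$.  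After the tangential estimate is in place, the ellipticity of $\Box$ in the interior (whose principal symbol is a scalar multiple of that of $\Delta$ acting on coefficients) lets us recover normal derivatives from the equation, producing
\[\|u\|_{W^{k+1}}^2 \lesssim \|\Box u\|_{W^k}^2 + \|u\|^2\]
for every nonnegative integer $k$ and every $u\in\Dom(\Box)$; non-integer $k$ follows by interpolation.

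\textbf{Conclusion.} The $L^2$ estimate from Theorem~\ref{basic} absorbs the remainder $\|u\|$, giving the global bound $\|u\|_{W^{k+1}} \lesssim \|\Box u\|_{W^k}$.  Because multiplication by $\phi\in C^\infty_0(\mathbb{C}^m)$ is a bounded operator on $W^{k+1}(\Omega)$, one concludes
\[\|\phi u\|_{W^{k+1}} \lesssim \|u\|_{W^{k+1}} \lesssim \|\Box u\|_{W^k}.\]
The main obstacle throughout is the iteration step: one must simultaneously track the commutators $[\Lambda^s,\Box]$ and ensure that the free (Neumann) boundary condition is preserved under differentiation, and the elliptic regularization of Kohn--Nirenberg is precisely the tool that makes both of these manageable.
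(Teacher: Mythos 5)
The paper itself does not prove this theorem; it simply cites Straube's monograph, noting after the statement that the result follows from inequality (3.62) in \cite{St10} with $\varphi_2\equiv 1$. Your proposal instead reconstructs the classical Kohn--Nirenberg argument from scratch. The outline is sound in broad strokes, but observe that it renders the cutoff $\phi$ entirely vacuous: you establish the global bound $\|u\|_{W^{k+1}}\lesssim\|\Box u\|_{W^k}$ on a strongly pseudoconvex $\Omega$ and then remark that multiplication by $\phi$ is bounded on $W^{k+1}$. That does prove the theorem as literally stated, but it forfeits the genuinely pseudolocal content of Straube's (3.62), which holds on a general pseudoconvex domain with $\varphi_1$ supported where the boundary is strongly pseudoconvex (and a second cutoff $\varphi_2\succ\varphi_1$ on the right). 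That local version is what the paper actually uses: the estimate is invoked with $\Omega$ merely pseudoconvex and $\phi=1-\chi$ supported away from the weakly pseudoconvex set $\Sigma$. Recovering the local estimate requires inserting cutoffs into the pseudodifferential operators and tracking supports through the commutator calculus, which your globalize-then-cut-off route bypasses.

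Two imprecisions in the iteration step are also worth flagging. First, the obstacle you attribute to elliptic regularization, that $\Lambda^s u$ may fail to lie in $\Dom(\bar\partial^*)$, is not quite the issue: tangential (pseudo)differential operators in a special boundary chart preserve $\Dom(\bar\partial^*)$ modulo controllable errors, precisely because membership in $\Dom(\bar\partial^*)$ is a condition on the normal component of $u$ on $\partial\Omega$. Elliptic regularization is needed for a different reason, namely to upgrade the \emph{a priori} estimate (which presupposes $u\in C^\infty(\overline\Omega)$) to a genuine estimate for all $u\in\Dom(\Box)$. Second, applying the $1/2$-subelliptic estimate to $\Lambda^{k+1/2}u$ controls $\|u\|_{W^{k+1}}$ by $\|\bar\partial u\|_{W^{k+1/2}}+\|\bar\partial^*u\|_{W^{k+1/2}}$ modulo commutators, not directly by $\|\Box u\|_{W^k}$; to close the loop one still integrates by parts and uses the Sobolev-scale Cauchy--Schwarz bound $\langle\Lambda^{k+1/2}\Box u,\Lambda^{k+1/2}u\rangle\lesssim\|\Box u\|_{W^k}\|u\|_{W^{k+1}}$ to convert the $1/2$-gain of the subelliptic estimate into the full gain of one derivative. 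Neither point is fatal, but both deserve to be made explicit if the proof is to be self-contained.
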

Observe that this follows from (3.62) in \cite{St10} with $\varphi_2\equiv 1$.  To make use of this, we will let $\chi$ be an arbitrary cutoff function supported in a neighborhood of $\Sigma$ and $\chi\equiv 1$ on $\Sigma$. Let $\phi:=1-\chi$.  We have that
\begin{equation}\label{ineq}
	\|u\|_{W^k}\leq\|\chi u\|_{W^k}+\|\phi u\|_{W^k}\leq \|\chi u\|_{W^k}+C\|\Box u\|_{W^k}.
\end{equation}

Consider \[\|\chi u\|_k^2=\int_{\Omega}g(\nabla^k\chi u,\nabla^k\chi u)\,dV\lesssim\|\Box u\|_{W^{k-1}}+\int_{\Omega}|\chi|^2g(\nabla^k u,\nabla^k u)\,dV.\] If we are able to show that $\int_{\Omega}|\chi|^2g(\nabla^k u,\nabla^k u)\,dV\lesssim\|\Box u\|_{W^k}$, we are done.

By Lemma 5.6 in Straube \citep{St10}, we can see that all derivatives are benign except for those in the direction of $L_m-\overline{L}_m$; see Section \ref{benign}. Hence, we just need to show that $\int_{\Omega}|\chi|^2g((\nabla_T^c)^k u,(\nabla_T^c)^k u)\,dV\lesssim\|\Box u\|_{W^k}$, for $T=e^\psi (L_m-\overline{L}_m)$, where $\psi$ is some weight function.

	\section{Benign Derivatives}\label{benign}
	
	\begin{proposition}
\label{prop:benign_derivative_setup}
		Let $\Omega\subset\mathbb{C}^m$ be a bounded domain with smooth boundary.  For every $k\in\mathbb{N}$, there exists a constant $C_k$ such that
		\begin{equation}
		\label{eq:benign_derivative_estimate}
		\|\bar{\partial} u\|^2_{W^k}+\|\bar{\partial}^* u\|^2_{W^k}\leq C_k(\|\Box u\|_{W^k}^2+\|u\|_{W^k}^2)
		\end{equation}
		for all $u\in C^\infty_{(0,q)}(\overline\Omega)\cap\Dom(\Box)$.
	\end{proposition}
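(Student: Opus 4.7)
The plan is to induct on $k$. The base case $k=0$ is the classical Morrey--Kohn--H\"ormander identity: for $u\in\Dom(\Box)$, integration by parts (pseudoconvexity is not needed) gives $\langle\Box u,u\rangle = \|\bar\partial u\|^2 + \|\bar\partial^*u\|^2$, so Cauchy--Schwarz yields the estimate with $C_0=\tfrac12$.

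For the inductive step, assume the estimate at level $k-1$ and use a partition of unity $\{\phi_0,\phi_1\}$ to split $\overline\Omega$ into an interior piece with $\support(\phi_0)\Subset\Omega$ and a collar neighborhood of $\partial\Omega$. On the interior piece, $\Box$ has elliptic second-order principal part, so standard interior elliptic regularity yields $\|\phi_0 u\|_{W^{k+2}} \lesssim \|\Box u\|_{W^k} + \|u\|_{W^k}$, which dominates $\|\phi_0\bar\partial u\|_{W^k}+\|\phi_0\bar\partial^* u\|_{W^k}$. On the collar, use the frame from Section 1 to decompose each $\nabla^k$ into compositions of tangential derivatives $T\in\{L_j,\bar L_j:j<m\}\cup\{L_m-\bar L_m\}$ and the real-normal derivative $N=L_m+\bar L_m$. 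For a purely tangential $T^\alpha$ of order $k$, tangential differentiation preserves the boundary condition $g(u,\omega_{\bar L_m}\wedge\omega_{\bar L_J})|_{\partial\Omega}=0$ up to corrections bounded by $|u|$ on $\partial\Omega$, so $T^\alpha u\in\Dom(\bar\partial^*)$ modulo such corrections, and applying the base-case identity to $T^\alpha u$ gives
\[
\|\bar\partial T^\alpha u\|^2 + \|\bar\partial^*T^\alpha u\|^2 = \langle \Box T^\alpha u, T^\alpha u\rangle + (\text{boundary corrections}).
\]
Then expand $\Box T^\alpha = T^\alpha\Box + [\Box, T^\alpha]$ and integrate the commutator once by parts (tangential boundary terms vanish by the divergence theorem on $\partial\Omega$) to redistribute a derivative, so that the result is controlled by $\|\Box u\|^2_{W^k}+\|u\|^2_{W^k}$ plus lower-order terms handled by the inductive hypothesis.

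For derivatives containing the normal direction $N$, use that the principal symbol of $\Box$ is nonvanishing in the conormal direction; hence the coefficient of $N^2$ in the local expression of $\Box$ is invertible, and $N^2 u = c\,\Box u + \mathcal{R} u$ where $\mathcal{R}$ contains at most one $N$-derivative. Iterating this identity reduces every $N^j$ with $j\geq 2$ appearing inside $\nabla^k\bar\partial u$ or $\nabla^k\bar\partial^* u$ to derivatives of $\Box u$ combined with at most one $N$ plus tangential derivatives, after which the tangential analysis above closes the estimate.

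The principal obstacle is the bookkeeping of $[\Box, T^\alpha]$, whose principal part has order $k+1$ in $u$ while only $\|u\|_{W^k}$ is permitted on the right-hand side; the crucial maneuver is to redistribute one derivative by integration by parts inside the pairing $\langle [\Box,T^\alpha]u,T^\alpha u\rangle$, combined with the inductive hypothesis, so that the resulting expression is bounded by $\|u\|^2_{W^k}+\|\Box u\|^2_{W^k}$ plus strictly lower-order terms. Rigorously justifying these formal manipulations on $u\in C^\infty_{(0,q)}(\overline\Omega)\cap\Dom(\Box)$ will require a tangential Friedrichs mollification so that $T^\alpha u$ may be treated as a legitimate test form in the base-case identity.
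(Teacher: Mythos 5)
Your overall strategy---induct on $k$, split into tangential and normal derivatives, exploit ellipticity of $\Box$ in the normal direction to trade $(\nabla_\zeta)^2$ for $\Box$ plus tangential terms, and integrate by parts for purely tangential derivatives---is the same as the paper's.  However, there is a genuine gap in the tangential step, and it is not merely a bookkeeping issue.

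When you apply $T^\alpha$ to the \emph{form} $u$ and say the boundary condition is preserved ``up to corrections bounded by $|u|$ on $\partial\Omega$,'' you have lost the key structural fact.  The failure of $T^\alpha u\in\Dom\bar{\partial}^*$ produces boundary integrals in the Morrey--Kohn--H\"ormander identity for $T^\alpha u$, and these boundary integrals pair an order-$(k+1)$ derivative of $u$ against an order-$(k-1)$ derivative of $u$ on $\partial\Omega$.  By trace estimates this is controlled by something like $\|u\|_{W^{k+1}}\|u\|_{W^{k}}$, which is \emph{not} dominated by the right-hand side of \eqref{eq:benign_derivative_estimate}, and the proposition gives no room to absorb a top-order term on the left (the left-hand side only controls $\bar{\partial}u$ and $\bar{\partial}^*u$, not the full $W^{k+1}$-norm of $u$).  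The paper avoids the issue entirely by using tangential operators $D_\tau$ that act only on the coefficients $g(u,\omega_{\overline{L}_I})$ in the boundary frame, as in Definition \ref{defn:special_connection}; such operators \emph{exactly} preserve $\Dom\bar{\partial}^*$ because the boundary condition is the vanishing of the coefficients $g(u,\omega_{\overline{L}_J}\wedge\omega_{\overline{L}_m})$ and tangential derivatives of a function vanishing on $\partial\Omega$ still vanish there.  Equivalently, one can observe that $T^\alpha u-D_\tau^k u$ is an operator of order $k-1$ and replace $T^\alpha u$ by $D_\tau^k u$ before invoking the base-case identity; your proposal does not make this substitution, so the ``boundary corrections'' remain uncontrolled.

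A secondary issue: the claimed interior estimate $\|\phi_0 u\|_{W^{k+2}}\lesssim\|\Box u\|_{W^k}+\|u\|_{W^k}$ is too strong.  Interior elliptic regularity applied to $\phi_0 u$ introduces the first-order commutator $[\Box,\phi_0]u$, so $\|\Box(\phi_0 u)\|_{W^k}\lesssim\|\Box u\|_{W^k}+\|u\|_{W^{k+1}}$, and you cannot afford $\|u\|_{W^{k+1}}$.  You only need (and can only get) a one-derivative gain, $\|\phi_0 u\|_{W^{k+1}}\lesssim\|\Box u\|_{W^{k-1}}+\|u\|_{W^k}$, which is sufficient to bound $\|\phi_0\bar{\partial}u\|_{W^k}$ and $\|\phi_0\bar{\partial}^*u\|_{W^k}$.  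The normal-derivative reduction and the tangential integration by parts you describe are otherwise in line with the paper's argument, and once the coefficient-only tangential operators replace $T^\alpha$, the plan closes.
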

	
	\begin{proof}
		We proceed by induction on $k$.  When $k=0$, this follows for $C_0=1$ because
		\[
		\|\bar{\partial} u\|^2+	\|\bar{\partial}^* u\|^2=(\Box u,u).
		\]
		We henceforth assume $k\geq 1$.
		
		Throughout the following $D^\ell_\tau$ will denote a generic differential operator of order $\ell$ that is tangential on $b\Omega$ that acts on coefficients of $u$, with respect to our local boundary coordinates, as in Definition \ref{defn:special_connection}.  It is clear that $D^\ell_\tau$ preserves $\Dom\bar{\partial}^*$.  Let $\zeta=L_m+\overline{L}_m$ denote a vector field that is equal to the outward unit normal on $b\Omega$.  To estimate the Sobolev norm of order $k$, we will need to estimate all operators of the form $D_\tau^{k-j}(\nabla^c_\zeta)^j$, where $1\leq j\leq k$.

		When $j\geq 2$, then we may use, e.g., (3.42) in \cite{St10} to rewrite $(\nabla^c_\zeta)^2$ as a linear combination of $\Box$, an operator of the form $D_\tau^2$, and lower order terms.  We note that $\bar{\partial} u\in\Dom(\Box)$, but $\bar{\partial}^* u$ is not necessarily in the domain of $\Box$, so we will need to interpret $\Box$ formally when we write $\Box\bar{\partial}^* u$.  Hence,
		\begin{multline*}
		\|D_\tau^{k-j}(\nabla^c_\zeta)^j\bar{\partial} u\|^2+\|D_\tau^{k-j}(\nabla^c_\zeta)^j\bar{\partial}^* u\|^2\leq \\ O\left(\|D_\tau^{k-j}(\nabla^c_\zeta)^{j-2}\Box\bar{\partial} u\|^2+\|D_\tau^{k-j}(\nabla^c_\zeta)^{j-2}\Box\bar{\partial}^* u\|^2\right)\\
		+O\left(\|D_\tau^{k-j+2}(\nabla^c_\zeta)^{j-2}\bar{\partial} u\|^2+\|D_\tau^{k-j+2}(\nabla^c_\zeta)^{j-2}\bar{\partial}^* u\|^2\right)\\
		+O\left(\|\bar{\partial} u\|_{W^k}\|\bar{\partial} u\|_{W^{k-1}}+\|\bar{\partial}^* u\|_{W^k}\|\bar{\partial}^* u\|_{W^{k-1}}\right).
		\end{multline*}
		Since $\Box\bar{\partial}=\bar{\partial}\Box$ and $\Box\bar{\partial}^*=\vartheta\Box$, we have
		\begin{multline*}
		\|D_\tau^{k-j}(\nabla^c_\zeta)^j\bar{\partial} u\|^2+\|D_\tau^{k-j}(\nabla^c_\zeta)^j\bar{\partial}^* u\|^2\leq O\left(\|\Box u\|^2_{W^{k-1}}\right)\\
		+O\left(\|D_\tau^{k-j+2}(\nabla^c_\zeta)^{j-2}\bar{\partial} u\|^2+\|D_\tau^{k-j+2}(\nabla^c_\zeta)^{j-2}\bar{\partial}^* u\|^2\right)\\
		+O\left(\|\bar{\partial} u\|_{W^k}\|\bar{\partial} u\|_{W^{k-1}}+\|\bar{\partial}^* u\|_{W^k}\|\bar{\partial}^* u\|_{W^{k-1}}\right).
		\end{multline*}
		Proceeding by downward induction on $j$, we see that for any $2\leq j\leq k$, we have
		\begin{multline}
		\label{eq:multiple_normal_derivative_estimate}
		\|D_\tau^{k-j}(\nabla^c_\zeta)^j\bar{\partial} u\|^2+\|D_\tau^{k-j}(\nabla^c_\zeta)^j\bar{\partial}^* u\|^2\leq O\left(\|\Box u\|^2_{W^{k-1}}\right)\\
		+O\left(\|D_\tau^{k-1}\nabla^c_\zeta\bar{\partial} u\|^2+\|D_\tau^{k-1}\nabla^c_\zeta\bar{\partial}^* u\|^2+\|D_\tau^{k}\bar{\partial} u\|^2+\|D_\tau^{k}\bar{\partial}^* u\|^2\right)\\
		+O\left(\|\bar{\partial} u\|_{W^k}\|\bar{\partial} u\|_{W^{k-1}}+\|\bar{\partial}^* u\|_{W^k}\|\bar{\partial}^* u\|_{W^{k-1}}\right).
		\end{multline}

When $j=1$, we use, e.g., Lemma 2.2 in \cite{St10} to write $\nabla^c_\zeta v$ as a linear combination of the coefficients of $\bar\partial v$, $\bar{\partial}^* v$, $v$, and tangential derivatives of $v$, where $v\in C^\infty_{0,\tilde q}(\overline\Omega)\cap\Dom\bar{\partial}^*$ for $\tilde q\in \{q-1,q,q+1\}$.  Hence,
\begin{multline}
\label{eq:one_normal_derivative_estimate}
		\|D_\tau^{k-1}\nabla^c_\zeta\bar{\partial} u\|^2+\|D_\tau^{k-1}\nabla^c_\zeta\bar{\partial}^* u\|^2\leq  O\left(\|D_\tau^{k-1}\bar{\partial}^*\bar{\partial} u\|^2+\|D_\tau^{k-1}\bar\partial\bar{\partial}^* u\|^2\right)\\
		+O\left(\|D_\tau^{k}\bar{\partial} u\|^2+\|D_\tau^{k}\bar{\partial}^* u\|^2\right)
		+O\left(\|\bar{\partial} u\|_{W^k}\|\bar{\partial} u\|_{W^{k-1}}+\|\bar{\partial}^* u\|_{W^k}\|\bar{\partial}^* u\|_{W^{k-1}}\right).
		\end{multline}
		Note that
		\[
		\|D^{k-1}_\tau\Box u\|^2=\|D^{k-1}_\tau\bar{\partial}^*\bar{\partial} u\|^2+2\Re\left<D^{k-1}_\tau\bar{\partial}^*\bar{\partial} u,D^{k-1}_\tau\bar{\partial}\bar{\partial}^* u\right>+\|D^{k-1}_\tau\bar{\partial}\bar{\partial}^* u\|^2.
		\]
		Since
		\begin{multline*}
		-2\Re\left<D^{k-1}_\tau\bar{\partial}^*\bar{\partial} u,D^{k-1}_\tau\bar{\partial}\bar{\partial}^* u\right>\\
		\leq-2\Re\left<D^{k-1}_\tau\bar{\partial} u,\bar{\partial} D^{k-1}_\tau\bar{\partial}\bar{\partial}^* u\right>+O\left(\|\bar{\partial} u\|_{W^{k-1}}\|D^{k-1}_\tau\bar{\partial}\bar{\partial}^* u\|\right)\\
		\leq O\left(\|\bar{\partial} u\|_{W^{k-1}}\|\bar{\partial}^* u\|_{W^k}\right),
		\end{multline*}
		we have
		\[
		\|D^{k-1}_\tau\bar{\partial}^*\bar{\partial} u\|^2+\|D^{k-1}_\tau\bar{\partial}\bar{\partial}^* u\|^2\leq\|D^{k-1}_\tau\Box u\|^2+O\left(\|\bar{\partial} u\|_{W^{k-1}}\|\bar{\partial}^* u\|_{W^k}\right).
		\]
Substituting this in \eqref{eq:one_normal_derivative_estimate}, we have
\begin{multline*}
		\|D_\tau^{k-1}\nabla^c_\zeta\bar{\partial} u\|^2+\|D_\tau^{k-1}\nabla^c_\zeta\bar{\partial}^* u\|^2\leq  O\left(\|D^{k-1}_\tau\Box u\|^2+\|D_\tau^{k}\bar{\partial} u\|^2+\|D_\tau^{k}\bar{\partial}^* u\|^2\right)\\
		+O\left(\|\bar{\partial} u\|_{W^{k-1}}\|\bar{\partial}^* u\|_{W^k}+\|\bar{\partial} u\|_{W^k}\|\bar{\partial} u\|_{W^{k-1}}+\|\bar{\partial}^* u\|_{W^k}\|\bar{\partial}^* u\|_{W^{k-1}}\right).
		\end{multline*}
Combining this with \eqref{eq:multiple_normal_derivative_estimate}, we obtain
		\begin{multline}
		\label{eq:one_or_more_normal_derivative_estimate}
		\|D_\tau^{k-j}(\nabla^c_\zeta)^j\bar{\partial} u\|^2+\|D_\tau^{k-j}(\nabla^c_\zeta)^j\bar{\partial}^* u\|^2\leq O\left(\|\Box u\|^2_{W^{k-1}}+\|D_\tau^{k}\bar{\partial} u\|^2+\|D_\tau^{k}\bar{\partial}^* u\|^2\right)\\
+O\left(\|\bar{\partial} u\|_{W^{k-1}}\|\bar{\partial}^* u\|_{W^k}+\|\bar{\partial} u\|_{W^k}\|\bar{\partial} u\|_{W^{k-1}}+\|\bar{\partial}^* u\|_{W^k}\|\bar{\partial}^* u\|_{W^{k-1}}\right).
		\end{multline}
for all $1\leq j\leq k$.

Turning to derivatives of the form $D_\tau^k$, we observe that since $D_\tau^k$ preserves $\Dom\bar{\partial}^*$, we may integrate by parts and obtain
\[
  \|D_\tau^{k}\bar{\partial} u\|^2+\|D_\tau^{k}\bar{\partial}^* u\|^2\leq\left<D_\tau^k\Box u,D_\tau^k u\right>+O\left(\|\bar{\partial} u\|_{W^{k}}\|u\|_{W^k}+\|\bar{\partial}^* u\|_{W^{k}}\|u\|_{W^k}\right).
\]
Substituting this into \eqref{eq:one_normal_derivative_estimate} gives us
\begin{multline*}
		\|D_\tau^{k-j}(\nabla^c_\zeta)^j\bar{\partial} u\|^2+\|D_\tau^{k-j}(\nabla^c_\zeta)^j\bar{\partial}^* u\|^2\leq O\left(\|\Box u\|^2_{W^{k-1}}+\left<D_\tau^k\Box u,D_\tau^k u\right>\right)\\
+O\left(\|\bar{\partial} u\|_{W^{k-1}}\|\bar{\partial}^* u\|_{W^k}+\|\bar{\partial} u\|_{W^k}\|\bar{\partial} u\|_{W^{k-1}}+\|\bar{\partial}^* u\|_{W^k}\|\bar{\partial}^* u\|_{W^{k-1}}\right)\\
+O\left(\|\bar{\partial} u\|_{W^{k}}\|u\|_{W^k}+\|\bar{\partial}^* u\|_{W^{k}}\|u\|_{W^k}\right).
		\end{multline*}
for all $0\leq j\leq k$.  Summing over all order $k$ differential operators and using a small constant/large constant estimate to absorb terms of the form $\|\bar{\partial}^* u\|_{W^k}$ or $\|\bar{\partial} u\|_{W^k}$ in the left-hand side, we obtain
\begin{multline*}
		\|\bar{\partial} u\|^2_{W^k}+\|\bar{\partial}^* u\|^2_{W^k}\leq O\left(\|\Box u\|^2_{W^{k-1}}+\left<D_\tau^k\Box u,D_\tau^k u\right>\right)\\
+O\left(\|\bar{\partial} u\|_{W^{k-1}}^2+\|\bar{\partial}^* u\|_{W^{k-1}}^2+\|u\|_{W^k}^2\right).
		\end{multline*}
The induction hypothesis will give us \eqref{eq:benign_derivative_estimate}.
		
	\end{proof}
	
	Combining the previous proposition and Lemma 5.6 of \cite{St10}, we obtain the following:
	
	\begin{proposition}
\label{prop:benign_derivatives}
		Let $\Omega$ be a smooth bounded pseudoconvex domain in $\mathbb{C}^m$ with $k\in\mathbb{N}$ and $0\leq q\leq m$. Then there exists a constant $C_k$ such that for $u\in C^\infty_{(0,q)}(\overline{\Omega})\cap\Dom(\Box)$, we have the estimates:
		
		\[\|\nabla_{\overline{L}_i}u\|^2_{k-1}\leq C_k(\|\Box u\|^2_{W^{k-1}}+\|u\|_{W^{k-1}}^2),\]  for $1\leq i\leq m$.
	Moreover,
		\[\|\nabla_{L_i}u\|^2_{k-1}\leq C_k(\|\Box u\|^2_{W^{k-1}}+\|u\|_{W^{k-1}}^2+\|u\|_{W^{k-1}}\|u\|_{W^k}),\]  for $1\leq i\leq m-1$.
	\end{proposition}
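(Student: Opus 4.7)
The approach is dictated by the authors' own preamble: combine Lemma 5.6 of Straube \cite{St10} with Proposition \ref{prop:benign_derivative_setup}. Lemma 5.6 provides, on a pseudoconvex domain, the pointwise-integrated control
\[
\sum_{i=1}^{m}\|\nabla_{\overline{L}_i} v\|^2 + \sum_{i=1}^{m-1}\|\nabla_{L_i} v\|^2 \lesssim \|\bar{\partial} v\|^2 + \|\bar{\partial}^* v\|^2 + \|v\|^2
\]
for $v \in \Dom(\bar{\partial}^*)\cap C^\infty_{(0,q)}(\overline\Omega)$. The first sum is the Morrey--Kohn--H\"ormander identity with the nonnegative Levi-form term discarded, and the second follows from integration by parts using that $L_i$ is complex tangential to $\partial\Omega$ for $i\leq m-1$.

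For the first inequality, I would apply Lemma 5.6 to $v=D_\tau^{k-1} u$, where $D_\tau^{k-1}$ ranges over the tangential differential operators of order $k-1$ that act only on coefficients (as in Definition \ref{defn:special_connection} and the proof of Proposition \ref{prop:benign_derivative_setup}), so that $D_\tau^{k-1} u \in \Dom(\bar{\partial}^*)$. Commuting $\nabla_{\overline{L}_i}$ past $D_\tau^{k-1}$ produces only errors of total order at most $k-1$, and summing yields
\[
\|\nabla_{\overline{L}_i} u\|_{k-1}^2 \lesssim \|\bar{\partial} u\|_{W^{k-1}}^2+\|\bar{\partial}^* u\|_{W^{k-1}}^2 + \|u\|_{W^{k-1}}^2.
\]
Applying Proposition \ref{prop:benign_derivative_setup} at level $k-1$ then replaces $\|\bar{\partial} u\|_{W^{k-1}}^2 + \|\bar{\partial}^* u\|_{W^{k-1}}^2$ by $\|\Box u\|_{W^{k-1}}^2 + \|u\|_{W^{k-1}}^2$, giving the first estimate.

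The second inequality is proved in the same way, with $\nabla_{L_i}$ in place of $\nabla_{\overline{L}_i}$. The new feature is that the commutator $[\nabla_{L_i}, D_\tau^{k-1}]$ can generate a genuine $k$-th order derivative of $u$: when $L_i$ is absorbed into the tangential string, the result is a $k$-th order derivative of $u$. However, because we only need to control $\|\nabla_{L_i}u\|_{k-1}^2$ in the end by $L^2$ pairing, such a term always appears paired linearly (via Cauchy--Schwarz) with a factor of at most $k-1$ derivatives of $u$, producing precisely the mixed term $\|u\|_{W^{k-1}}\|u\|_{W^k}$ that appears in the statement.

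The main obstacle is not the analytic heart of the argument, which is entirely in Lemma 5.6 and Proposition \ref{prop:benign_derivative_setup}, but rather the careful bookkeeping: one must confirm that the integration by parts used for the tangential $L_i$-part of Lemma 5.6 does not produce boundary terms of the wrong sign on a pseudoconvex domain, and that the commutators $[\nabla_{\overline{L}_i},D_\tau^{k-1}]$ and $[\nabla_{L_i},D_\tau^{k-1}]$ can be broken into pieces that are absorbed either into $\|u\|_{W^{k-1}}^2$ or into the mixed $\|u\|_{W^{k-1}}\|u\|_{W^k}$ term. Once those verifications are carried out, the conclusion drops out by straightforward substitution.
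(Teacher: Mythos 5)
Your high-level strategy — apply Lemma 5.6 of \cite{St10} to tangential derivatives of $u$, commute, and invoke Proposition~\ref{prop:benign_derivative_setup} at level $k-1$ — is the intended one, but there are two genuine gaps.

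First, the seminorm $\|\cdot\|_{k-1}$ in the statement is $\int_\Omega g(\nabla^{k-1}\cdot,\nabla^{k-1}\cdot)\,dV$, which includes \emph{normal} derivatives. You apply Lemma 5.6 only to $v=D^{k-1}_\tau u$ and then assert that ``summing yields $\|\nabla_{\overline{L}_i}u\|_{k-1}^2\lesssim\cdots$''; but summing over tangential operators controls only the purely tangential $(k-1)$-st order derivatives of $\nabla_{\overline{L}_i}u$. Terms with at least one factor of $\nabla_\zeta$ (with $\zeta=L_m+\overline{L}_m$ the normal) need separate treatment, exactly as in the proof of Proposition~\ref{prop:benign_derivative_setup}: one normal derivative is replaced via Lemma 2.2 of \cite{St10} by coefficients of $\bar\partial u$, $\bar\partial^* u$, $u$, and tangential derivatives, while two or more are replaced via (3.42) of \cite{St10}, rewriting $(\nabla^c_\zeta)^2$ in terms of $\Box$ and $D_\tau^2$. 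Omitting this is a real step, not just bookkeeping.

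Second, your explanation of the mixed term $\|u\|_{W^{k-1}}\|u\|_{W^k}$ is incorrect, and as a consequence your argument as written does not actually produce it. The commutator $[\nabla_{L_i},D_\tau^{k-1}]$ has order at most $k-1$ (each elementary commutator of vector-field derivatives is first order, so the Leibniz expansion of $[\nabla_{L_i},D_\tau^{k-1}]$ has order $k-1$); it cannot ``generate a genuine $k$-th order derivative of $u$.'' The mixed term instead originates \emph{inside} Lemma 5.6: the integration by parts comparing $\|L_i v\|^2$ with $\|\overline{L}_i v\|^2$ (for complex-tangential $L_i$) produces a term $\left<[\overline{L}_i,L_i]v,v\right>=O(\|v\|_1\|v\|)$, already present at $k=1$. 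Applying this with $v=D_\tau^{k-1}u$ gives $O(\|u\|_{W^k}\|u\|_{W^{k-1}})$. Your stated version of Lemma 5.6, with no error term, is therefore too strong: the unbarred tangential bound carries a $\|v\|\,\|v\|_1$ error, which is precisely why the second estimate in the proposition has a mixed term while the first (obtained from the Morrey--Kohn--H\"ormander identity alone, with no such integration by parts) does not.
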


\section{Commutation Relations}
	
\subsection{Commutators of First Derivatives}
\label{sec:commutators_first_derivatives}

Since
\[
  \bar{\partial}u=\frac{1}{2^q}{\sum_{I}}' \sum_{i\notin I} \overline{L}_i g(u,\omega_{\overline{L}_I})\omega_{\overline{L}_i}\wedge\omega_{\overline{L}_I}-\frac{1}{2^q}{\sum_{I}}' \sum_{i\notin I} g(u,\nabla_{L_i}\omega_{\overline{L}_I})\omega_{\overline{L}_i}\wedge\omega_{\overline{L}_I},
\]
we have
\[
  [\bar{\partial},\nabla_{L_m-\overline{L}_m}^c]u=\frac{1}{2^q}{\sum_{I}}' \sum_{i\notin I} [\overline{L}_i,L_m-\overline{L}_m] g(u,\omega_{\overline{L}_I})\omega_{\overline{L}_i}\wedge\omega_{\overline{L}_I}+\text{lower order terms}.
\]
To compute the component of $[\overline{L}_i,L_m-\overline{L}_m]$ which is not estimated by Proposition \ref{prop:benign_derivatives}, we first use the fact that $\nabla$ is torsion-free to compute
\[
  g([\overline{L}_i,L_m-\overline{L}_m],L_m)=g(\nabla_{\overline{L}_i}(L_m-\overline{L}_m)-\nabla_{L_m-\overline{L}_m}\overline{L}_i,L_m).
\]
Since $\nabla$ is also the Chern connection, type considerations give us
\[
  g([\overline{L}_i,L_m-\overline{L}_m],L_m)=g(\nabla_{\overline{L}_i}L_m,L_m).
\]
Finally, since $\nabla$ is metric compatible and $g(L_m,L_m)=\frac{1}{2}$,
\[
  g([\overline{L}_i,L_m-\overline{L}_m],L_m)=-g(L_m,\nabla_{L_i}L_m),
\]
and hence
\[[\bar{\partial},\nabla_{L_m-\overline{L}_m}^c]u
=-\frac{1}{2^{q-1}}{\sum_{I}}' \sum_{i\notin I} g(L_m, \nabla_{L_i}L_m)(L_m-\overline{L}_m)g(u,\omega_{\overline{L}_I})\omega_{\overline{L}_i}\wedge\omega_{\overline{L}_I}+K,\]
where $K$ consists of lower order terms and derivatives which can be estimated by Proposition \ref{prop:benign_derivatives}.  In particular, $\|K\|^2\leq \epsilon\|u\|_1^2+C_\epsilon(\|\Box u\|^2+\|u\|^2)$ for any $\epsilon>0$, where $C_\epsilon$ is some positive constant.

Similarly,
\[\bar{\partial}^* u=-\frac{1}{2^{q-1}}{\sum_{I}}'\sum_{i\in I} L_i g(u, \omega_{\overline{L}_I})\overline{L}_i\lrcorner\omega_{\overline{L}_I}+\frac{1}{2^{q-1}}{\sum_{I}}'\sum_{i\in I} g(u, \nabla_{\overline{L}_i}\omega_{\overline{L}_I})\overline{L}_i\lrcorner\omega_{\overline{L}_I},\]
so
\[[\bar{\partial}^*,\nabla_{L_m-\overline{L}_m}^c]u=-\frac{1}{2^{q-1}}{\sum_{I}}'\sum_{i\in I} [L_i,L_m-\overline{L}_m] g(u, \omega_{\overline{L}_I})\overline{L}_i\lrcorner\omega_{\overline{L}_I}+\text{lower order terms}.\]
Observe that $[L_i,L_m]\delta=0$, so $[L_i,L_m]$ is in the span of $\{L_1,\ldots,L_{m-1}\}$.  Hence,
\[
  g([L_i,L_m-\overline{L}_m],L_m)=-g([L_i,\overline{L}_m],L_m).
\]
Using the same techniques as before,
\[
  -g([L_i,\overline{L}_m],L_m)=-g(L_i,\nabla_{L_m}L_m),
\]
and hence
\[	[\bar{\partial}^*,\nabla_{L_m-\overline{L}_m}^c]u
	=\frac{1}{2^{q-2}}{\sum_{I}}' \sum_{i\in I}g(L_i,\nabla_{L_m}L_m)(L_m-\overline{L}_m)g(u, \omega_{\overline{L}_I})\overline{L}_i\lrcorner\omega_{\overline{L}_I}+2R,\]
where $R$ consists of lower order terms and derivatives which can be estimated by Proposition \ref{prop:benign_derivatives}.  In particular, $\|R\|^2\leq \epsilon\|u\|_1^2+C_\epsilon(\|\Box u\|^2+\|u\|^2)$ for any $\epsilon>0$, where $C_\epsilon$ is some positive constant.

Since \[g(\nabla_{L_i}L_m,L_m)=g(\nabla_{L_i}\nabla\delta,L_m)=\Hessian(L_i,L_m)=\overline{\Hessian(L_m,L_i)}=g(L_i,\nabla_{L_m}L_m),\] we obtain that,
\begin{equation}
\label{eq:related coefficients}
  g(L_i,\nabla_{L_m}L_m)=\overline{g(L_m,\nabla_{L_i}L_m)}.
\end{equation}

Furthermore, we consider the vector field $T=e^\psi(L_m-\overline{L}_m)$, where $\psi$ is a real, smooth function defined in a neighborhood of $\partial\Omega$. Since the weight function $e^\psi$ is bounded between two positive numbers, we can see $T$ is comparable with $L_m-\overline{L}_m$. Based on the computation of $[\bar{\partial},\nabla_{L_m-\overline{L}_m}^c]u$, we calculate

	\[[\bar{\partial},e^\psi\nabla^c_{L_m-\overline{L}_m}]u=\frac{1}{2^q}{\sum_{I}}' \sum_{i\notin I} \overline{L}_i(e^\psi)(L_m-\overline{L}_m)g(u,\omega_{\overline{L}_I})\omega_{\overline{L}_i}\wedge\omega_{\overline{L}_I}+e^\psi[\bar{\partial},\nabla^c_{L_m-\overline{L}_m}]u.\]

More precisely, we have that

\[	[\bar{\partial},\nabla^c_{T}]u
	=\frac{1}{2^q}{\sum_{I}}' \sum_{i\notin I} e^\psi(\overline{L}_i\psi-2g(L_m, \nabla_{L_i}L_m))(L_m-\overline{L}_m)g(u,\omega_{\overline{L}_I})\omega_{\overline{L}_i}\wedge\omega_{\overline{L}_I}+Ke^\psi,
\]
where $\|K\|^2\leq \epsilon\|u\|_1^2+C_\epsilon(\|\bar{\partial}u\|^2+\|\bar{\partial}^* u\|^2)$ for arbitrary $\epsilon>0$.

Similarly, we have that

\[	[\bar{\partial}^*,\nabla^c_T]u=-\frac{1}{2^{q-1}}{\sum_{I}}' \sum_{i\in I}e^\psi (L_i\psi -2g(L_i,\nabla_{L_m}L_m)) (L_m-\overline{L}_m) g(u, \omega_{\overline{L}_I})\overline{L}_i\lrcorner\omega_{\overline{L}_I}+2e^\psi R,\]

where $\|2R\|^2\leq \epsilon\|u\|_1^2+C_\epsilon(\|\bar{\partial}u\|^2+\|\bar{\partial}^* u\|^2)$ for arbitrary $\epsilon>0$.

\subsection{Higher Order Derivatives}
In this section, we will consider the commutators for derivatives of higher orders. In other words, we consider the terms of $[\bar{\partial},(\nabla^c_{T})^k]u$ and $[\bar{\partial}^*,(\nabla^c_{T})^k]u$.
\begin{lemma}
	For $k\in\mathbb{N}$,
	\[[\bar{\partial},(\nabla^c_{T})^k]u=\frac{k}{2^q}{\sum_{I}}' \sum_{i\notin I} (\overline{L}_i\psi-2g(L_m, \nabla_{L_i}L_m))T^kg(u,\omega_{\overline{L}_I})\omega_{\overline{L}_i}\wedge\omega_{\overline{L}_I}+K,\]
	where $\|K\|^2\leq \epsilon\|u\|_{W^k}^2+C_\epsilon(\|\Box u\|_{W^{k-1}}^2+\|u\|_{W^{k-1}}^2)$ for arbitrary $\epsilon>0$.
\end{lemma}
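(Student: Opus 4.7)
The plan is to induct on $k$. The base case $k=1$ is the identity derived at the end of Section \ref{sec:commutators_first_derivatives}, once we rewrite $e^\psi(L_m-\overline{L}_m)g(u,\omega_{\overline{L}_I})=Tg(u,\omega_{\overline{L}_I})$ and invoke Proposition \ref{prop:benign_derivative_setup} (with $k=0$) to absorb $\|\bar\partial u\|^2+\|\bar\partial^* u\|^2$ into $\|\Box u\|^2+\|u\|^2$. Setting $A_i:=\overline{L}_i\psi-2g(L_m,\nabla_{L_i}L_m)$, the inductive step starts from the Leibniz decomposition
\[
  [\bar\partial,(\nabla^c_T)^k]=\nabla^c_T\circ[\bar\partial,(\nabla^c_T)^{k-1}]+[\bar\partial,\nabla^c_T]\circ(\nabla^c_T)^{k-1}.
\]
Since $\nabla^c_T$ acts only on coefficients, applying the induction hypothesis to the first summand and then $\nabla^c_T$ promotes $T^{k-1}g(u,\omega_{\overline{L}_I})$ to $T^kg(u,\omega_{\overline{L}_I})$ and produces the main term $\frac{k-1}{2^q}{\sum_I}'\sum_{i\notin I}A_iT^kg(u,\omega_{\overline{L}_I})\omega_{\overline{L}_i}\wedge\omega_{\overline{L}_I}$. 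For the second summand, the coefficients of $v:=(\nabla^c_T)^{k-1}u$ are $T^{k-1}g(u,\omega_{\overline{L}_I})$, so applying the $k=1$ formula to $v$ produces the main term $\frac{1}{2^q}{\sum_I}'\sum_{i\notin I}A_iT^kg(u,\omega_{\overline{L}_I})\omega_{\overline{L}_i}\wedge\omega_{\overline{L}_I}$. Summing the two main contributions gives the combinatorial factor $(k-1)+1=k$, matching the stated leading term.

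The aggregated error $K$ has three sources. First, the terms where $\nabla^c_T$ falls on the smooth coefficient $A_i$ produce $T(A_i)T^{k-1}g(u,\omega_{\overline{L}_I})$ with $L^2$ norm $\lesssim\|u\|_{W^{k-1}}$. Second, the error $K^{(1)}_v$ from the $k=1$ formula applied to $v=(\nabla^c_T)^{k-1}u$ is estimated by $\|K^{(1)}_v\|^2\leq\epsilon\|v\|_1^2+C_\epsilon(\|\bar\partial v\|^2+\|\bar\partial^* v\|^2)$; we have $\|v\|_1\lesssim\|u\|_{W^k}$, while $\bar\partial v=(\nabla^c_T)^{k-1}\bar\partial u+[\bar\partial,(\nabla^c_T)^{k-1}]u$ (and its $\bar\partial^*$-analogue) is bounded in $L^2$ by $\|\Box u\|_{W^{k-1}}+\|u\|_{W^{k-1}}$ via Proposition \ref{prop:benign_derivative_setup} and the induction hypothesis. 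Third, and most delicate, is $\nabla^c_T K_{k-1}$: the bare $L^2$-bound on $K_{k-1}$ furnished by the induction hypothesis is too weak to control its $T$-derivative, so I would carry along with the induction an explicit structural description of $K_{k-1}$ as a finite sum of differential monomials in $u$ of total order at most $k-1$, each containing at least one benign factor in the sense of Proposition \ref{prop:benign_derivatives} (a derivative along $\overline{L}_j$, along $L_j$ for $j\leq m-1$, or a tangential real derivative). Then $\nabla^c_T K_{k-1}$ consists of such monomials at total order $k$, each estimated by Propositions \ref{prop:benign_derivative_setup} and \ref{prop:benign_derivatives} together with a small-constant/large-constant split that absorbs the $\|u\|_{W^k}^2$ contributions back into $\epsilon\|u\|_{W^k}^2+C_\epsilon(\|\Box u\|_{W^{k-1}}^2+\|u\|_{W^{k-1}}^2)$.

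The main obstacle is precisely this third piece of bookkeeping: the lemma as stated treats $K$ abstractly, but the inductive argument needs a more refined structural track record of $K_{k-1}$. Once that is in place, the combinatorial identity $(k-1)+1=k$ in the leading coefficient together with the benign-derivative estimates from Propositions \ref{prop:benign_derivative_setup} and \ref{prop:benign_derivatives} closes the induction.
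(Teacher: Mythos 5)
Your inductive strategy is essentially the same as the paper's: both proceed by Leibniz-type splitting of the commutator and the combinatorial observation $(k-1)+1=k$ in the leading coefficient. You use the decomposition
\[
[\bar\partial,(\nabla^c_T)^k]=\nabla^c_T\circ[\bar\partial,(\nabla^c_T)^{k-1}]+[\bar\partial,\nabla^c_T]\circ(\nabla^c_T)^{k-1},
\]
while the paper uses the mirror image
\[
[\bar\partial,(\nabla^c_T)^k]=[\bar\partial,(\nabla^c_T)^{k-1}]\circ\nabla^c_T+(\nabla^c_T)^{k-1}\circ[\bar\partial,\nabla^c_T];
\]
the two give identical leading terms (in the paper's version, $T^{k-1}g(\nabla^c_T u,\omega_{\overline{L}_I})=T^kg(u,\omega_{\overline{L}_I})$ yields the factor $k-1$, and the second summand supplies the remaining $1$). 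The one substantive point you raise --- that the bare $L^2$ bound on $K_{k-1}$ is not closed under $\nabla^c_T$, so the induction must carry a structural description of $K_{k-1}$ as a sum of monomials of order at most $k-1$ each containing a benign factor --- is a legitimate observation, and it is not peculiar to your ordering: in the paper's decomposition the corresponding burden is bounding $K_{k-1}(\nabla^c_T u)$ and $(\nabla^c_T)^{k-1}K_1$, which again requires knowing more about $K$ than its $L^2$ norm (e.g.\ controlling $\|\Box\nabla^c_T u\|_{W^{k-2}}$ needs the commutator $[\Box,\nabla^c_T]$ to involve benign directions). The paper dispatches this with the remark ``since otherwise it is a derivative of order $k-1$,'' which implicitly assumes exactly the structural tracking you make explicit. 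So your proposal is not a different route; it is the same route with the error bookkeeping spelled out more honestly, and completing it via Propositions~\ref{prop:benign_derivative_setup} and~\ref{prop:benign_derivatives} as you indicate is the right thing to do.
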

\begin{proof}
	We use induction for $k\in\mathbb{N}$.
	We observe that the lemma holds for $k=1$ by the computation in the previous section.
	
	We assume the lemma holds for $(\nabla^c_T)^{k-1}$, and observe that

	\[[\bar{\partial},(\nabla^c_T)^k]
	=[\bar{\partial},(\nabla^c_T)^{k-1}]\nabla_T^c+(\nabla^c_T)^{k-1}[\bar{\partial},\nabla_T^c].\]

	We only need consider the action of $(\nabla^c_T)^{k-1}$ on the derivative term of $[\bar{\partial},\nabla^c_T]$, since otherwise it is a derivative of order $k-1$. Hence, we obtain that
	\[[\bar{\partial},(\nabla^c_{T})^k]u=\frac{k}{2^q}{\sum_{I}}' \sum_{i\notin I} (\overline{L}_i\psi-2g(L_m, \nabla_{L_i}L_m))T^kg(u,\omega_{\overline{L}_I})\omega_{\overline{L}_i}\wedge\omega_{\overline{L}_I}+K,\]
	where $\|K\|^2\leq \epsilon\|u\|_{W^k}^2+C_\epsilon(\|\Box u\|_{W^{k-1}}^2+\|u\|_{W^{k-1}}^2)$ for arbitrary $\epsilon>0$.
\end{proof}
Similarly, we have the lemma for $\bar{\partial}^*$.
\begin{lemma}
	For $k\in\mathbb{N}$,
	\[[\bar{\partial}^*,(\nabla^c_{T})^k]u=-\frac{k}{2^{q-1}}{\sum_{I}}' \sum_{i\in I}(L_i\psi -2g(L_i,\nabla_{L_m}L_m)) T^k g(u, \omega_{\overline{L}_I})\overline{L}_i\lrcorner\omega_{\overline{L}_I}+2R,\]
	where $\|2R\|^2\leq \epsilon\|u\|_{W^k}^2+C_\epsilon(\|\Box u\|_{W^{k-1}}^2+\|u\|_{W^{k-1}}^2)$ for arbitrary $\epsilon>0$.
\end{lemma}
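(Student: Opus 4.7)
The proof should proceed by induction on $k$, mirroring the argument just given for $\bar{\partial}$. The base case $k=1$ is already established by the first-order commutator computation in Section \ref{sec:commutators_first_derivatives}, which gives
\[
[\bar{\partial}^*,\nabla^c_T]u=-\frac{1}{2^{q-1}}{\sum_{I}}'\sum_{i\in I}(L_i\psi-2g(L_i,\nabla_{L_m}L_m))T\,g(u,\omega_{\overline{L}_I})\,\overline{L}_i\lrcorner\omega_{\overline{L}_I}+2e^\psi R,
\]
where the remainder satisfies $\|2R\|^2\leq \epsilon\|u\|_1^2+C_\epsilon(\|\bar\partial u\|^2+\|\bar\partial^* u\|^2)$, which in turn is controlled by $\epsilon\|u\|_{W^1}^2+C_\epsilon(\|\Box u\|^2+\|u\|^2)$ via Theorem \ref{basic}.

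For the inductive step, assume the identity holds for $k-1$ and use the standard commutator identity
\[
[\bar{\partial}^*,(\nabla^c_T)^k] = [\bar{\partial}^*,(\nabla^c_T)^{k-1}]\,\nabla^c_T + (\nabla^c_T)^{k-1}\,[\bar{\partial}^*,\nabla^c_T].
\]
The first piece, by the inductive hypothesis applied to $\nabla^c_T u$, contributes the coefficient $(L_i\psi-2g(L_i,\nabla_{L_m}L_m))$ times $T^{k-1}(\nabla^c_T u)$, which carries a $T^k$ derivative of the coefficients of $u$, matching the desired principal term with a combinatorial factor of $k-1$. In the second piece, only the arrangement in which all $k-1$ copies of $\nabla^c_T$ land on the $T$-derivative already present in $[\bar{\partial}^*,\nabla^c_T]$ yields a genuine $T^k$-derivative of the coefficients; this contributes the remaining factor of $1$, producing the total multiplier $k$. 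All other distributions of $(\nabla^c_T)^{k-1}$ fall on the coefficient functions $(L_i\psi-2g(L_i,\nabla_{L_m}L_m))$ or on $\omega_{\overline{L}_I}$, leaving at most $k-1$ $T$-derivatives on $u$, which is controlled by $\|u\|_{W^{k-1}}^2$.

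The remainder collects three kinds of terms: (i) the inductive error $(\nabla^c_T)^0$-shifted, which is already of the required form after replacing $u$ by $\nabla^c_T u$ and using that $\|\nabla^c_T u\|_{W^{k-1}}\lesssim \|u\|_{W^k}$ in the $\epsilon$-term, and $\|\Box \nabla^c_T u\|_{W^{k-2}}^2\lesssim \|\Box u\|_{W^{k-1}}^2+\|u\|_{W^{k-1}}^2$ by commuting $\Box$ with $\nabla^c_T$ modulo first-order operators; (ii) commutators $[\nabla^c_T,\overline{L}_i\lrcorner]$ and $[(\nabla^c_T)^{k-1},\text{coeff}]$, which contain at most $k-1$ derivatives and are absorbed into $\|u\|_{W^{k-1}}^2$; and (iii) benign antiholomorphic and tangential holomorphic derivatives arising from lower order terms, handled by Proposition \ref{prop:benign_derivatives}, which estimates $\|\nabla_{\overline{L}_i}u\|_{k-1}^2$ and $\|\nabla_{L_i}u\|_{k-1}^2$ ($i\leq m-1$) by the desired right-hand side plus a term $\|u\|_{W^{k-1}}\|u\|_{W^k}$ that can be absorbed by a small/large constant argument into $\epsilon\|u\|_{W^k}^2+C_\epsilon\|u\|_{W^{k-1}}^2$.

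The main obstacle is purely bookkeeping: one must verify that every stray term produced by redistributing the $k-1$ copies of $\nabla^c_T$, together with the residual errors from the inductive hypothesis and from the commutators with $\overline{L}_i\lrcorner$, can be bounded in the form $\epsilon\|u\|_{W^k}^2+C_\epsilon(\|\Box u\|_{W^{k-1}}^2+\|u\|_{W^{k-1}}^2)$. The key tools are Proposition \ref{prop:benign_derivatives} to trade $\bar{\partial}$/$\bar{\partial}^*$-derivatives for $\Box$-derivatives, the fact that $\nabla^c_T$ preserves $\Dom(\bar{\partial}^*)$ so integration by parts is legal at each stage, and the standard small constant/large constant trick to absorb any $\|u\|_{W^{k-1}}\|u\|_{W^k}$ cross terms into the $\epsilon\|u\|_{W^k}^2$ piece. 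No new estimate beyond those used in the $\bar{\partial}$-lemma is required.
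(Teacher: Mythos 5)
Your proposal is correct and takes essentially the same route as the paper: the paper proves the $\bar{\partial}^*$ version by the same induction on $k$ via the splitting $[\bar{\partial}^*,(\nabla^c_T)^k]=[\bar{\partial}^*,(\nabla^c_T)^{k-1}]\nabla^c_T+(\nabla^c_T)^{k-1}[\bar{\partial}^*,\nabla^c_T]$ that it used for $\bar{\partial}$, dismissing the $\bar{\partial}^*$ case with ``Similarly.'' Your proposal simply spells out the bookkeeping (the $k-1$ plus $1$ coefficient count and the absorption of the inductive error via Proposition \ref{prop:benign_derivative_setup}/\ref{prop:benign_derivatives}) that the paper leaves implicit.
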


\subsection{Commutators with Cutoff Functions}
\label{sec:commutators_cutoff_functions}
In this section, we will compute the terms $[\bar{\partial},\chi(\nabla^c_{T})^k]u$ and $[\bar{\partial}^*,\chi(\nabla^c_{T})^k]u$, where $\chi$ is a smooth cutoff function. These two terms have practical use to isolate the weakly pseudoconvex points from strongly pseudoconvex points in order to use Theorem \ref{pseudolocal}.

We compute, for $k\in\mathbb{N}$,
	\begin{align*}
	&[\bar{\partial},\chi(\nabla^c_{T})^k]u\\
	=&\frac{1}{2^q}{\sum_{I}}' \sum_{i\notin I} (\overline{L}_i\chi)T^kg(u,\omega_{\overline{L}_I})\omega_{\overline{L}_i}\wedge\omega_{\overline{L}_I}+\chi[\bar{\partial},(\nabla^c_T)^k]u\\
	=&\frac{1}{2^q}{\sum_{I}}' \sum_{i\notin I} (\overline{L}_i\chi)T^kg(u,\omega_{\overline{L}_I})\omega_{\overline{L}_i}\wedge\omega_{\overline{L}_I}\\
&\qquad+\frac{k}{2^q}{\sum_{I}}' \sum_{i\notin I} \chi (\overline{L}_i\psi-2g(L_m, \nabla_{L_i}L_m))T^kg(u,\omega_{\overline{L}_I})\omega_{\overline{L}_i}\wedge\omega_{\overline{L}_I}+\chi K,
	\end{align*}
	where $\|K\|^2\leq \epsilon\|u\|_{W^k}^2+C_\epsilon(\|\Box u\|_{W^{k-1}}^2+\|u\|_{W^{k-1}}^2)$ for arbitrary $\epsilon>0$. Since we assume that $\chi\equiv 1$ on $\Sigma$, we have $\support(\overline{L}_i\chi )\cap \partial\Omega\Subset\partial\Omega\backslash\Sigma$.

Thus, we have that
\begin{align*}
	&\langle[\bar{\partial},\chi(\nabla^c_{T})^k]u,[\bar{\partial},\chi(\nabla^c_{T})^k]u\rangle\\
	\leq &\Big(\frac{k^2(m-q)}{2^{q-1}}{m\choose q}+\epsilon\Big) {\sum_I}'\sum_{i\notin I}\int_{\Omega} |\overline{L}_i\psi-2g(L_m, \nabla_{L_i}L_m)|^2|g(\chi(\nabla_{T}^c)^ku,\omega_{\overline{L}_I})|^2\,dV\\
	&+C_\epsilon\left(\|{\sum_{I}}' \sum_{i\notin I} (\overline{L}_i\chi)T^kg(u,\omega_{\overline{L}_I})\omega_{\overline{L}_i}\wedge\omega_{\overline{L}_I}\|^2+\|\Box u\|_{W^{k-1}}^2+\|u\|_{W^{k-1}}^2\right).
\end{align*}
In this estimate, we have used the following inequality: for a sequence of complex numbers $\lbrace a_j\rbrace_{j=1}^n$, \[\left|\sum_{j=1}^{n}a_j\right|^2\leq n\sum_{j=1}^{n}\left|a_j\right|^2.\] One can see this through the Cauchy--Schwarz inequality.  In this case, we have $n=(m-q){m\choose q}$, since the space of increasing multi-indices of length $q$ has dimension ${m\choose q}$, and for each $I$ the number of elements $i\notin I$ is equal to $m-q$.

Similarly, we have the following.
	\begin{align*}
	&[\bar{\partial}^*,\chi(\nabla^c_{T})^k]u\\
	=&-\frac{1}{2^{q-1}}{\sum_{I}}' \sum_{i\in I}(L_i\chi) T^kg(u, \omega_{\overline{L}_I})\overline{L}_i\lrcorner\omega_{\overline{L}_I}+\chi[\bar{\partial}^*,(\nabla_T^c)^k]u\\
	=&-\frac{1}{2^{q-1}}{\sum_{I}}' \sum_{i\in I}(L_i\chi) T^kg(u, \omega_{\overline{L}_I})\overline{L}_i\lrcorner\omega_{\overline{L}_I}\\
&\qquad-\frac{k}{2^{q-1}}{\sum_{I}}' \sum_{i\in I}\chi (L_i\psi -2g(L_i,\nabla_{L_m}L_m)) T^k g(u, \omega_{\overline{L}_I})\overline{L}_i\lrcorner\omega_{\overline{L}_I}+2\chi R,
	\end{align*}
	where $\|2R\|^2\leq \epsilon\|u\|_{W^k}^2+C_\epsilon(\|\Box u\|_{W^{k-1}}^2+\|u\|_{W^{k-1}}^2)$ for arbitrary $\epsilon>0$.

Thus, we have that
\begin{align*}
&\langle[\bar{\partial}^*,\chi(\nabla^c_{T})^n]u,[\bar{\partial}^*,\chi(\nabla^c_{T})^n]u\rangle\\
\leq&\Big(\frac{k^2 q}{2^{q-1}}{m\choose q}+\epsilon\Big){\sum_{I}}' \sum_{i\in I}\int_{\Omega}|L_i\psi -2g(L_i,\nabla_{L_m}L_m)|^2|g(\chi(\nabla_T^c)^ku, \omega_{\overline{L}_I})|^2\,dV\\
&+C_\epsilon\left(\|{\sum_{I}}' \sum_{i\in I}(L_i\chi) T^kg(u, \omega_{\overline{L}_I})\overline{L}_i\lrcorner\omega_{\overline{L}_I}\|^2+\|\Box u\|_{W^{k-1}}^2+\|u\|_{W^{k-1}}^2\right).
\end{align*}
Therefore, by rescaling $\epsilon$, and taking the support of $\chi$ sufficiently small so that
\[
  \sup_{\support\chi}\sum_{i=1}^m|L_i\psi -2g(L_i,\nabla_{L_m}L_m)|^2\leq m(1+\epsilon)\sup_{i,\Sigma}|L_i\psi -2g(L_i,\nabla_{L_m}L_m)|^2
\]
we may use \eqref{eq:related coefficients} to obtain\begin{equation}\label{main}
\begin{split}
	&\|[\bar{\partial}^*,\chi(\nabla^c_{T})^k]u\|^2+\|[\bar{\partial},\chi(\nabla^c_{T})^k]u\|^2\\
\leq& 2\Big(k^2m^2{m\choose q}+\epsilon\Big)\sup_{i,\Sigma}|L_i\psi -2g(L_i,\nabla_{L_m}L_m)|^2 \|\chi(\nabla^c_{ T})^ku\|^2+\epsilon\|\chi(\nabla_T^c)^k u\|^2\\&+C_\epsilon\|\Box u\|_{W^{k-1}}^2+C_\epsilon\|u\|_{W^{k-1}}^2,
\end{split}
\end{equation}

because $L_i\chi$ and $\overline{L}_i\chi$ are only nonzero at strongly pseudoconvex points and Theorem \ref{pseudolocal} applies there.

\section{The \textit{A Priori} Estimate}

In this section we will show how to use the commutators in previous sections to prove our main theorem. We will make repeated use of the following estimate: for any $v,f\in C^\infty_{0,q}(\overline\Omega)$ supported in a coordinate boundary chart,
\begin{equation}
\label{eq:integration_by_parts_k}
  \left|\left<\chi(\nabla^c_T)^k v,f\right>-\left<v,\chi(\nabla^c_T)^k f\right>\right|\leq O\left(\left<(\nabla^c_T)^{k-1} v,f\right>\right).
\end{equation}
We will prove this by induction on $k$.  Since $\nabla^c_T$ is compatible with the metric $g$ and $T$ is purely imaginary,
\[
  \left<\chi(\nabla^c_T)^k v,f\right>=\int_\Omega\chi T g((\nabla^c_T)^{k-1}v,f)dV+\left<\chi(\nabla^c_T)^{k-1} v,\nabla^c_T f\right>.
\]
Since $T$ is tangential, Stokes' Theorem will imply
\[
  \left|\left<\chi(\nabla^c_T)^k v,f\right>-\left<\chi(\nabla^c_T)^{k-1} v,\nabla^c_T f\right>\right|\leq O\left(\left<(\nabla^c_T)^{k-1}v,f\right>\right).
\]
This proves \eqref{eq:integration_by_parts_k} when $k=1$.  When $k\geq 2$, we use our induction hypothesis to obtain
\[
  \left|\left<\chi(\nabla^c_T)^{k-1} v,\nabla^c_T f\right>-\left<v,\chi(\nabla^c_T)^{k} f\right>\right|\leq O\left(\left<(\nabla^c_T)^{k-2} v,\nabla^c_T f\right>\right).
\]
Applying \eqref{eq:integration_by_parts_k}  with $k=1$ to the error term, we have \eqref{eq:integration_by_parts_k}.

Consider $\|\chi(\nabla^c_T)^ku\|^2$.
The following estimate is essentially contained in \cite{CS01} and the reader is referred to \cite{St10} as well.  From Theorem \ref{basic}, we obtain for any $u\in C^\infty_{0,q}(\overline\Omega)\cap\Dom\bar{\partial}^*$ supported in a special boundary chart:
\begin{align*}
	&\langle \chi(\nabla^c_T)^ku, \chi(\nabla^c_T)^ku\rangle\\
	\leq&B\langle\bar{\partial}\chi(\nabla^c_T)^ku,\bar{\partial}\chi(\nabla^c_T)^ku\rangle+B\langle\bar{\partial}^* \chi(\nabla^c_T)^ku,\bar{\partial}^* \chi(\nabla^c_T)^ku\rangle\\
	=&B\langle\chi(\nabla^c_T)^k\bar{\partial}u,\bar{\partial}\chi(\nabla^c_T)^ku\rangle+B\langle[\bar{\partial}, \chi(\nabla^c_T)^k]u,\bar{\partial}\chi(\nabla^c_T)^ku\rangle\\&+B\langle\chi(\nabla^c_T)^k\bar{\partial}^* u,\bar{\partial}^* \chi(\nabla^c_T)^ku\rangle+B\langle[\bar{\partial}^*, \chi(\nabla^c_T)^k] u,\bar{\partial}^* \chi(\nabla^c_T)^ku\rangle
\end{align*}
Using \eqref{eq:integration_by_parts_k}, we have:
\begin{align*}
	&B\langle\bar{\partial}\chi(\nabla^c_T)^ku,\bar{\partial}\chi(\nabla^c_T)^ku\rangle+B\langle\bar{\partial}^* \chi(\nabla^c_T)^ku,\bar{\partial}^* \chi(\nabla^c_T)^ku\rangle\\
	\leq &B\langle\bar{\partial}u, \chi (\nabla^c_T)^k\bar{\partial}\chi(\nabla^c_T)^ku\rangle+B\langle[\bar{\partial}, \chi(\nabla^c_T)^k]u,\bar{\partial}\chi(\nabla^c_T)^ku\rangle+B\langle\bar{\partial}^* u,\chi(\nabla^c_T)^k\bar{\partial}^*\chi(\nabla^c_T)^ku\rangle\\&+B\langle[\bar{\partial}^*,\chi(\nabla^c_T)^k] u,\bar{\partial}^* \chi(\nabla^c_T)^ku\rangle+O\left(\langle(\nabla^c_T)^{k-1}\bar{\partial}u,\bar{\partial}\chi(\nabla^c_T)^ku\rangle+\langle(\nabla^c_T)^{k-1}\bar{\partial}^* u,\bar{\partial}^* \chi(\nabla^c_T)^ku\rangle\right)
\end{align*}
If $u\in\Dom\Box$ as well, then we may take additional commutators and integrate by parts to obtain:
\begin{align*}
	&B\langle\bar{\partial}\chi(\nabla^c_T)^ku,\bar{\partial}\chi(\nabla^c_T)^ku\rangle+B\langle\bar{\partial}^* \chi(\nabla^c_T)^ku,\bar{\partial}^* \chi(\nabla^c_T)^ku\rangle\\
	\leq&B\langle\bar{\partial}u, \chi(\nabla^c_T)^k[\chi(\nabla^c_T)^k,\bar{\partial}]u\rangle+B\langle\bar{\partial}u, [[\chi(\nabla^c_T)^k,\bar{\partial}],\chi(\nabla^c_T)^k]u\rangle+B\langle\bar{\partial}^* u,\chi(\nabla^c_T)^k[\chi(\nabla^c_T)^k,\bar{\partial}^*]u\rangle\\&+B\langle\bar{\partial}^* u,[[\chi(\nabla^c_T)^k,\bar{\partial}^*],\chi(\nabla^c_T)^k]u\rangle+B\langle\Box u, \chi(\nabla^c_T)^k\chi(\nabla^c_T)^ku\rangle+B\langle[\bar{\partial}, \chi(\nabla^c_T)^k]u,\bar{\partial}\chi(\nabla^c_T)^ku\rangle\\&+B\langle[\bar{\partial}^*,\chi(\nabla^c_T)^k] u,\bar{\partial}^* \chi(\nabla^c_T)^ku\rangle+O\left(\langle(\nabla^c_T)^{k-1}\bar{\partial}u,\bar{\partial}\chi(\nabla^c_T)^ku\rangle+\langle(\nabla^c_T)^{k-1}\bar{\partial}^* u,\bar{\partial}^* \chi(\nabla^c_T)^ku\rangle\right).
\end{align*}
We now use \eqref{eq:integration_by_parts_k} again and take further commutators to obtain:
\begin{align*}
&B\langle\bar{\partial}\chi(\nabla^c_T)^ku,\bar{\partial}\chi(\nabla^c_T)^ku\rangle+B\langle\bar{\partial}^* \chi(\nabla^c_T)^ku,\bar{\partial}^* \chi(\nabla^c_T)^ku\rangle\\
	\leq&B\langle\chi(\nabla^c_T)^k\Box u, \chi(\nabla^c_T)^ku\rangle+2B\i\Im\langle[\bar{\partial}, \chi(\nabla^c_T)^k]u,\bar{\partial}\chi(\nabla^c_T)^ku\rangle\\
&+2B\i\Im\langle[\bar{\partial}^*,\chi( \nabla^c_T)^k] u,\bar{\partial}^* \chi(\nabla^c_T)^ku\rangle+B\langle[\bar{\partial},\chi(\nabla^c_T)^k]u, [\bar{\partial},\chi(\nabla^c_T)^k]u\rangle\\
&+B\langle\bar{\partial}u, [[\chi(\nabla^c_T)^k,\bar{\partial}],\chi(\nabla^c_T)^k]u\rangle+B\langle[\bar{\partial}^*,\chi(\nabla^c_T)^k ]u,[\bar{\partial}^*,\chi(\nabla^c_T)^k]u\rangle\\
&+B\langle\bar{\partial}^* u,[[\chi(\nabla^c_T)^k,\bar{\partial}^*],\chi(\nabla^c_T)^k]u\rangle+O\left(\langle\chi(\nabla^c_T)^{k-1}\Box u, \chi(\nabla^c_T)^ku\rangle\right)\\
&+O\left(\langle\chi(\nabla^c_T)^{k-1}\bar{\partial}u, [\chi(\nabla^c_T)^k,\bar{\partial}]u\rangle+\langle\chi(\nabla^c_T)^{k-1}\bar{\partial}^* u,[\chi(\nabla^c_T)^k,\bar{\partial}^*]u\rangle\right)\\ &+O\left(\langle(\nabla^c_T)^{k-1}\bar{\partial}u,\bar{\partial}\chi(\nabla^c_T)^ku\rangle+\langle(\nabla^c_T)^{k-1}\bar{\partial}^* u,\bar{\partial}^* \chi(\nabla^c_T)^ku\rangle\right).
\end{align*}
Observe that this expression is necessarily real, so we may discard the imaginary part of each term, especially those terms which are purely imaginary.  For all the remaining terms of the form $\left<\cdot,\bar{\partial}\chi(\nabla^c_T)^ku\right>$ or $\left<\cdot,\bar{\partial}^*\chi(\nabla^c_T)^ku\right>$, we use a small constant/large constant inequality to absorb one term on the left-hand side and divide by the new coefficient.  Replacing the left-hand side with $\|\chi(\nabla^c_T)^ku\|^2$ (using Theorem \ref{basic}) and using a similar small constant/large constant inequality allows us to take care of terms of the form $\left<\cdot,\chi(\nabla^c_T)^ku\right>$.  As noted in \cite{St10} (see the discussion after (3.57)), the nested commutators can be expressed as the composition of a tangential $(k-1)$st order operator with a $k$th order operator, so we may integrate by parts with the $k-1$ tangential derivatives.  Using small constant/large constant inequalities for the remaining terms, we are left with
\begin{align*}
&\|\chi(\nabla^c_T)^ku\|^2\\
	\leq&(B+\epsilon)\|[\bar{\partial},\chi(\nabla^c_T)^k]u\|^2
+(B+\epsilon)\|[\bar{\partial}^*,\chi(\nabla^c_T)^k ]u\|^2+\epsilon\|u\|^2_{W^k}\\
&+C_\epsilon\left(\|\Box u\|^2_{W^k}+\|\bar{\partial}u\|^2_{W^{k-1}}+\|\bar{\partial}^* u\|^2_{W^{k-1}}\right),
\end{align*}
for any $\epsilon>0$, where $C_\epsilon>0$.

We proceed by induction, assuming $\|u\|_{W^{k-1}}\lesssim\|\Box u\|_{W^{k-1}}$.  Using Proposition \ref{prop:benign_derivative_setup}, we have:
\[\|\chi(\nabla^c_T)^ku\|^2\leq C_{\epsilon,k}\|\Box u\|^2_{W^{k}}+\epsilon\|u\|^2_k+(B+\epsilon)\|[\bar{\partial},\chi(\nabla^c_T)^k]u\|^2+(B+\epsilon)\|[\bar{\partial}^*,\chi(\nabla^c_T)^k]u\|^2\]

Similarly, we have that, by the estimate (\ref{main}) and rescaling $C_{k,\epsilon}$ and $\epsilon$,
\begin{equation}\label{final}
	\begin{split}
	&\|\chi(\nabla^c_T)^ku\|^2\\
	\leq &C_{k,\epsilon}\|\Box u\|^2_{W^{k}}+\epsilon\|u\|^2_k+2\left(Bk^2m^2{m\choose q}+\epsilon\right)\sup_{i,\Sigma}|L_i\psi -2g(L_i,\nabla_{L_m}L_m)|^2 \|\chi(\nabla^c_{ T})^ku\|^2.
	\end{split}
\end{equation}

	We obtain the following theorem.
	
	\begin{theorem}\label{maintheorem}
		Let $\Omega$ be a bounded pseudoconvex domain with smooth boundary in $\mathbb{C}^m$ and $1\leq q\leq m$. Assume there exists a smooth, real-valued function defined in a neighborhood $U$ of $\Sigma$ in $\partial\Omega$ so that \[\sup_{\Sigma}|L\psi-2g(\nabla_{L}L_m,L_m)|<\frac{1}{km\sqrt{B{m\choose q}}},\] for all unit $(1,0)$ vector fields $L$ in a neighborhood of $\partial\Omega$ where $k\in\mathbb{N}$ and $B$ is the constant from Theorem \ref{basic}. Then for any smooth $(0,q)$-form u in $\Dom(\Box)$, we have \[\|u\|_{W^k}\lesssim\|\Box u\|_{W^k}\]
	\end{theorem}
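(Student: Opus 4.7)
The approach is to set up an induction on $k$ driven by the \emph{a priori} estimate \eqref{final}, with the hypothesis on $\psi$ precisely calibrated so that the last term on the right of \eqref{final} can be absorbed into the left-hand side.

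The base case $k=0$ is Theorem \ref{basic}. For the inductive step, assume $\|v\|_{W^{k-1}}\lesssim\|\Box v\|_{W^{k-1}}$ for all smooth $v\in\Dom(\Box)$; since the hypothesis for $k$ strengthens the one for $k-1$, this is consistent with the standing assumption. Via a partition of unity subordinate to $\{U_\alpha\}_{\alpha=1}^M$ and the decomposition $1 = \chi + (1-\chi)$, together with Theorem \ref{pseudolocal} applied to $(1-\chi)u$ as in \eqref{ineq}, matters reduce to controlling $\|\chi u\|_{W^k}$ with $\chi$ supported in a single $U_\alpha$. Using Lemma 5.6 of \cite{St10} to identify all benign directions, together with Proposition \ref{prop:benign_derivatives}, it then suffices to estimate $\|\chi(\nabla^c_T)^k u\|^2$ by $\|\Box u\|^2_{W^k}$ modulo absorbable errors.

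Apply \eqref{final}. The hypothesis is stated for unit $(1,0)$ fields $L$, while the coefficient in \eqref{final} is written in terms of $|L_i\psi - 2g(L_i,\nabla_{L_m}L_m)|$ with $g(L_i,L_i)=\tfrac{1}{2}$; the rescaling $L=\sqrt{2}L_i$ together with \eqref{eq:related coefficients} converts the hypothesis to $\sup_{i,\Sigma}|L_i\psi - 2g(L_i,\nabla_{L_m}L_m)|^2 < \tfrac{1}{2k^2m^2 B{m\choose q}}$. Hence the coefficient $2(Bk^2m^2{m\choose q}+\epsilon)$ times this supremum is strictly less than $1$ for $\epsilon$ sufficiently small, allowing absorption of $\|\chi(\nabla^c_T)^k u\|^2$ on the left of \eqref{final}. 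What remains on the right is $C_{k,\epsilon}\|\Box u\|^2_{W^k} + \epsilon\|u\|^2_k$, which I then combine with \eqref{ineq}, the benign-derivative reduction, and a small-constant/large-constant inequality to handle the $\|u\|_{W^{k-1}}\|u\|_{W^k}$ contribution from Proposition \ref{prop:benign_derivatives}; invoking the inductive hypothesis $\|u\|_{W^{k-1}}\lesssim\|\Box u\|_{W^{k-1}}\le\|\Box u\|_{W^k}$ and making a final choice of $\epsilon$ small enough to absorb the residual $\epsilon\|u\|^2_{W^k}$ on the left, I conclude $\|u\|_{W^k}\lesssim\|\Box u\|_{W^k}$.

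The main obstacle is the careful constant bookkeeping in the absorption step: the hypothesis is sharp relative to the coefficient from \eqref{final}, so the normalization difference between $L_i$ and the unit field $L$ must be accounted for, and strict inequality must be preserved through all the $O(\epsilon)$ perturbations introduced by the error terms from Proposition \ref{prop:benign_derivatives} and the repeated small-constant/large-constant absorptions. A secondary subtlety is the passage from the $\Sigma$-supremum in the hypothesis to the $\support\chi$-supremum used in \eqref{main}, already arranged in Section \ref{sec:commutators_cutoff_functions} by shrinking $\support\chi$ to a sufficiently small neighborhood of $\Sigma$.
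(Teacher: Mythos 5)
Your proposal follows the paper's own route closely: induction on $k$ with base case from Theorem \ref{basic}; reduction via \eqref{ineq} and Theorem \ref{pseudolocal} to the part of $u$ near $\Sigma$; reduction to $\|\chi(\nabla_T^c)^k u\|^2$ via the benign-derivative estimates of Proposition \ref{prop:benign_derivatives}; and then the absorption step driven by \eqref{final}. Your normalization accounting is a genuinely useful clarification that the paper leaves implicit: since $g(L_i,L_i)=\tfrac12$, the unit field is $L=\sqrt{2}L_i$, so the hypothesis
$\sup_\Sigma|L\psi-2g(\nabla_L L_m,L_m)|<\frac{1}{km\sqrt{B{m\choose q}}}$
becomes $\sup_{i,\Sigma}|L_i\psi-2g(L_i,\nabla_{L_m}L_m)|^2<\frac{1}{2k^2m^2 B{m\choose q}}$, and the strictness of the inequality is exactly what lets you push $2\big(Bk^2m^2{m\choose q}+\epsilon\big)\sup(\cdots)$ below $1$ for $\epsilon$ small enough; this is correct.

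One step you treat as routine that the paper must address explicitly is the local-to-global passage. The operator $\nabla^c_{T,\alpha}$ is defined relative to the chosen frame $\{L_j\}$ on $U_\alpha$, so $(\nabla^c_{T,\alpha})^k$ is not coordinate-free and the local estimates for different $\alpha$ do not automatically patch. The paper's proof handles this by noting that $\nabla^c_{T,\alpha}-\nabla_T$ is zero-order (hence $(\nabla^c_{T,\alpha})^k-(\nabla_T)^k$ is order $k-1$), and that on any nonempty intersection $\sum_{\alpha\in\mathcal{M}}\bar{\partial}\lambda_\alpha=0$, so the principal part of $\sum_\alpha[\bar{\partial},\chi\lambda_\alpha(\nabla^c_{T,\alpha})^k]u$ in the $T$-direction is independent of the partition of unity, and likewise for $\bar{\partial}^*$. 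Your phrase that matters reduce to ``controlling $\|\chi u\|_{W^k}$ with $\chi$ supported in a single $U_\alpha$'' skips this; $\Sigma$ need not lie inside a single chart, so you should include some version of this compatibility argument to complete the gluing.
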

\begin{proof}
	We assume $u\in C^\infty_{(0,q)}(\overline{\Omega})\cap\Dom(\Box)$.
	In a local coordinate chart, this follows from the estimate (\ref{final}) by induction on $k$ and Proposition \ref{prop:benign_derivatives}. For the general case, we choose a partition of unity $\lbrace\lambda_\alpha\rbrace_{\alpha=1}^M$ such that $\support \lambda_\alpha\subset U_\alpha$. Although $\nabla_{T,\alpha}^c$ depends on our choice of local coordinates, we recall that $\nabla_{T,\alpha}^c-\nabla_{T}$ is a zero-order operator on $U_\alpha$, so $(\nabla_{T,\alpha}^c)^k-(\nabla_T)^k$ is an operator of order $k-1$ on $U_\alpha$.  We observe that for $\mathcal{M}\subset\{1,\ldots,M\}$ such that $\bigcap_{\alpha\in\mathcal{M}}U_\alpha\neq\emptyset$, $\sum_{\alpha\in\mathcal{M}}\bar{\partial}\lambda_\alpha=0$, so
\[
  \sum_{\alpha\in\mathcal{M}}[\bar{\partial},\chi\lambda_\alpha(\nabla^c_{T,\alpha})^k]u
  =\sum_{\alpha\in\mathcal{M}}\lambda_\alpha[\bar{\partial},\chi(\nabla^c_{T,\alpha})^k]u+\sum_{\alpha\in\mathcal{M}}(\bar{\partial}\lambda_\alpha)\wedge\chi((\nabla^c_{T,\alpha})^k-(\nabla_T)^k)u.
\]
As computed in Section \ref{sec:commutators_cutoff_functions}, the principal part of $[\bar{\partial},\chi\lambda_\alpha(\nabla^c_{T,\alpha})^k]u$ in the direction of $T$ is independent of $\alpha$, so the principal part of $\sum_{\alpha\in\mathcal{M}}[\bar{\partial},\chi\lambda_\alpha(\nabla^c_{T,\alpha})^k]u$ in the direction of $T$ is also independent of the choice of partition of unity.  The commutator with $\bar{\partial}^*$ is handled similarly.
	
	The terms $g([\bar{\partial},\chi(\nabla^c_T)^k] u,[\bar{\partial},\chi(\nabla^c_T)^k] u)$ and $g([\bar{\partial}^*,\chi(\nabla^c_T)^k] u,[\bar{\partial},\chi(\nabla^c_T)^k] u)$ remain the same except for benign derivatives (which can be estimated by Proposition \ref{prop:benign_derivatives}) or lower order derivatives (which can be estimated by induction), so the estimates in local coordinate charts still hold globally. Thus, we have that
	\[\|\chi(\nabla^c_T)^ku\|\lesssim\|\Box u\|_{W^k}.\] This completes the proof
	
\end{proof}

\section{Beyond the \textit{A Priori} Estimate}
In this section, we will see how to pass Theorem \ref{maintheorem} to the genuine estimate. Our idea is similar to the classical one which the reader can find in \cite{CS01} and \cite{St10}.

Let $\delta$ denote the signed distance function. Let $M>0$ be so large that $\rho_\tau:=\delta+\tau e^{M|z|^2}$ defines a smooth strongly pseudoconvex domain $\Omega_\tau\subset\subset\Omega$ for small $\tau>0$. Let $N_\tau$ denote the $\bar{\partial}$-Neumann operator on $\Omega_\tau$, i.e., $N_\tau=\Box_\tau^{-1}$.  By the classical work of Kohn on strongly pseudoconvex domains with smooth boundaries \cite{Ko63, Ko64}, we know that $N_\tau$ is continuous in $W^k(\Omega_\tau)$ for all $k\in\mathbb{N}$.  Let $u\in W^k(\Omega)$. On each $\Omega_\tau$, we find $N_\tau u\in W^k(\Omega_\tau)$ because of the global regularity of $N_\tau$ on $\Omega_\tau$. By the discussion in previous sections for weakly pseudoconvex points and Theorem \ref{pseudolocal}, we obtain that \[\|N_\tau u\|_{W^k(\Omega_\tau)}\leq C\|u\|_{W^k(\Omega_\tau)}\leq C\|u\|_{W^k(\Omega)},\] where $C$ is independent from $\tau$. Extend $N_\tau u$ to $\overline{\Omega}$ by letting it be zero outside $\Omega_\tau$. We have that \[\|N_\tau u\|_{L^2(\Omega)}\leq\|N_\tau u\|_{W^k(\Omega_\tau)}\leq C\|u\|_{W^k(\Omega)}.\]

Since $\|N_\tau u\|_{L^2(\Omega)}$ is uniformly bounded, then a subsequence of $\lbrace N_\tau u\rbrace_\tau$ is weakly convergent in the $L^2$ norm. Let $v$ be the weak limit.

We want to show that $v\in\Dom(\Box)$. First, to show $v\in\Dom(\bar{\partial}^*)$, we check that for an arbitrary $(0,1)$-form $\phi $ on $\Omega$ in $\Dom(\bar{\partial})$,
\[\langle\bar{\partial}^* v, \phi\rangle:=\langle v, \bar{\partial}\phi\rangle_{L^2(\Omega)}=\lim_\tau\langle N_\tau u, \bar{\partial}\phi\rangle_{L^2(\Omega)}=\lim_\tau\langle \bar{\partial}^*_\tau N_\tau u, \phi\rangle_{L^2(\Omega_\tau)}\]
because $N_\tau u$ is in $\Dom(\Box_\tau)$ for $\Omega_\tau$, where $\bar{\partial}^*_\tau$ is the adjoint of $\bar{\partial}$ in $\Omega_\tau$. But \begin{align*}&\lim_\tau\langle \bar{\partial}^*_\tau N_\tau u, \phi\rangle_{L^2(\Omega_\tau)}\\
&\leq \lim_\tau\|\bar{\partial}^*_\tau N_\tau u\|_{L^2(\Omega_\tau)}\|\phi\|_{L^2(\Omega)}\leq \lim_\tau\sqrt{\|u\|_{L^2(\Omega)}\|N_\tau u\|_{L^2(\Omega)}}\|\phi\|_{L^2(\Omega)}\lesssim\|\phi\|_{L^2(\Omega)}.\end{align*} Thus, by the Hahn--Banach theorem and the Riesz representation theorem, we have that $v\in\Dom(\bar{\partial}^*)$.

We now show $v\in\Dom(\bar{\partial})$ and will also show that $\bar{\partial}v\in\Dom(\bar{\partial}^*)$. For an arbitrary smooth $(0,1)$-form $\phi$ with compact support in $\Omega$, we compute,
\[\langle\bar{\partial}v, \phi\rangle:=\langle v, \bar{\partial}^*\phi\rangle_{L^2(\Omega)}=\lim_\tau\langle N_\tau u, \bar{\partial}^*_\tau\phi\rangle_{L^2(\Omega_\tau)}=\lim_\tau\langle \bar{\partial}N_\tau u, \phi\rangle_{L^2(\Omega_\tau)}=\langle g,\phi\rangle_{L^2(\Omega)},\] where $g$ is the weak limit of $\bar{\partial}N_\tau u$. The weak limit exists because of the uniform boundedness of $\|\bar{\partial}N_\tau u\|_{W^k}$ (see Lemma 3.2 in \cite{St10}).  Thus, $\bar{\partial}v=g\in L^2(\Omega)$, i.e.,  $v\in\Dom(\bar{\partial})$. Moreover, as a byproduct, we can see that \[\langle\bar{\partial}v, \psi\rangle=\lim_\tau\langle\bar{\partial}N_\tau u, \psi\rangle_{L^2(\Omega_\tau)},\] for an arbitrary $(0,1)$-form $\psi$ in $L^2(\Omega)$. Thus, for an arbitrary $(0,1)$-form $\phi$ on $\Omega$ in $\Dom(\bar{\partial})$,  \[\langle\bar{\partial}v, \bar{\partial}\phi\rangle_{L^2(\Omega)}=\lim_\tau\langle\bar{\partial}N_\tau u, \bar{\partial}\phi\rangle_{L^2(\Omega_\tau)}=\lim_\tau\langle \bar{\partial}^*_\tau\bar{\partial}N_\tau u, \phi\rangle_{L^2(\Omega_\tau)}\leq\|u\|_{L^2(\Omega)}\|\phi\|_{L^2(\Omega)}.\] Again, by the Hahn--Banach theorem and the Riesz representation theorem, we have that $\bar{\partial}v\in\Dom(\bar{\partial}^*)$. And hence, $v\in\Dom(\Box)$.

We now show $\Box v=u$. For an arbitrary $(0,1)$-form $\phi$ with compact support in $\Omega$, we verify \[\langle\Box v, \phi\rangle_{L^2(\Omega)}=\langle u,\phi\rangle_{L^2(\Omega)}.\]

Observe that \[\langle\Box v, \phi\rangle_{L^2(\Omega)}=\langle v, \Box \phi\rangle_{L^2(\Omega)}=\lim_\tau\langle N_\tau u, \Box\phi\rangle_{L^2(\Omega)}=\langle u, \phi\rangle_{L^2(\Omega)},\] which proves $\Box v=u$ in the distribution sense.  Since we have already shown $v\in\Dom\Box$, we have $\Box v=u$.

We now turn to show that $\|v\|_{W^k(\Omega)}\leq C\|u\|_{W^k(\Omega)}$. Fix an arbitrary $\tau_0>0$ and on $\Omega_{\tau_0}$, $N_\tau u$ weakly converges to $v|_{\Omega_{\tau_0}}$ in $W^k(\Omega_{\tau_0})$. Since \[\|v\|_{W^k(\Omega_{\tau_0})}\leq \liminf_\tau\|N_\tau u\|_{W^k(\Omega_{\tau_0})},\] we obtain that \[\|v\|_{W^k(\Omega_{\tau_0})}\leq C\|u\|_{W^k(\Omega)},\] which implies  \[\|v\|_{W^k(\Omega)}\leq C\|u\|_{W^k(\Omega)},\] because $\tau_0$ is arbitrary.

Thus, for an $u\in W^k(\Omega)$, we can find $v\in W^k(\Omega)\cap\Dom(\Box)$ so that $\Box v=u$ and $\|v\|_{W^k(\Omega)}\leq C\|u\|_{W^k(\Omega)}$. So we obtain Theorem \ref{maintheorem2}.

\bigskip
\bigskip

\printbibliography

\end{document}